\renewcommand\bibfont\footnotesize
\theoremstyle{plain} 
\newtheorem{theorem}{Theorem}[section]
\newaliascnt{lemma}{theorem}
\newtheorem{lemma}[lemma]{Lemma}
\newaliascnt{corollary}{theorem}
\newtheorem{corollary}[corollary]{Corollary}
\newaliascnt{proposition}{theorem}
\newtheorem{proposition}[proposition]{Proposition}
\theoremstyle{definition}
\newtheorem{definition}[theorem]{Definition}
\theoremstyle{remark}
\newaliascnt{example}{theorem}
\newtheorem{example}[example]{Example}
\newtheorem{remark}[theorem]{Remark}
\newcommand\csum{\sum_\circlearrowleft}
\begin{document}

\title[Connections, Lie algebra actions, and numerical integration]{Invariant
  connections, Lie algebra actions, and foundations of numerical integration
  on manifolds}
\author{Hans Z.\ Munthe-Kaas}
\address{Department of Mathematics, University of Bergen}
\email{hans.munthe-kaas@uib.no}

\author{Ari Stern}
\address{Department of Mathematics and Statistics,
  Washington University in St.~Louis}
\email{stern@wustl.edu}

\author{Olivier Verdier}
\address{Department of Computing, Mathematics and Physics, Western Norway University of Applied Sciences, and Department of Mathematics, KTH Royal Institute of Technology}
\email{olivier.verdier@hvl.no, olivierv@kth.se}

\begin{abstract}
  Motivated by numerical integration on manifolds, we relate the
  algebraic properties of invariant connections to their geometric
  properties. Using this perspective, we generalize some classical
  results of Cartan and Nomizu to invariant connections on
  algebroids. This has fundamental consequences for the theory of
  numerical integrators, giving a characterization of the spaces on
  which Butcher and Lie--Butcher series methods, which generalize
  Runge--Kutta methods, may be applied.
\end{abstract}

\subjclass[2010]{53C05, 17A30, 17B66}

\maketitle

\section{Introduction}

\subsection{Background and motivation}

A connection on a smooth manifold $M$ can be viewed as a
non-associative product on the Lie algebra of vector fields
$ \mathfrak{X} (M) $. The properties of the resulting algebra contain
geometric information about the connection and about $M$ itself. In
particular, a flat and torsion-free connection gives
$ \mathfrak{X} (M) $ the structure of a \emph{pre-Lie algebra}, while
a flat connection with parallel torsion gives $ \mathfrak{X} (M) $ the
structure of a \emph{post-Lie algebra}. The notion of a pre-Lie
algebra originates from work of
\citet{Vinberg1963,Gerstenhaber1963,AgGa1981}, while post-Lie algebras
are due to \citet{Vallette2007}.

Recently, \citet{MuLu2013} related this algebraic perspective to
certain analytical techniques for approximating flows of vector
fields: Butcher series methods
(\citet{Butcher1963,Butcher1972,HaWa1974}) in the pre-Lie case and
Lie--Butcher series methods
(\citet{Munthe-Kaas1995,Munthe-Kaas1998,Munthe-Kaas1999}) in the more
general post-Lie case. These techniques, which involve expressing
flows as formal power series in rooted trees and forests, were
originally developed for the analysis of numerical integrators.

It is natural to ask which manifolds $M$ admit such structures, and to
which the techniques of Butcher and Lie--Butcher series may therefore
be applied. \citet{Nomizu1954}, following earlier work of E.~Cartan
\citep{Cartan1927}, showed that $M$ admits a flat and torsion-free
connection (i.e., an affine manifold structure) if and only if it is
locally representable as an abelian Lie group with its canonical
affine connection, while $M$ admits a flat connection with parallel
torsion if and only if it is locally representable as a Lie group with
its $ ( - ) $-connection. It follows that Butcher series methods may
be applied in the former case and Lie--Butcher series methods in the
latter case.

However, a 1999 paper of \citet{Munthe-Kaas1999} (see also
\citet{MuWr2008}) showed that such methods may be applied, more
generally, whenever a Lie algebra $ \mathfrak{g} $ acts transitively
on $M$. This includes not only the case where $ M = G $ is the Lie
group integrating $ \mathfrak{g} $, but also (for example) when $M$ is
a homogeneous space, or when $M$ is equipped with a frame of vector
fields generating a Lie subalgebra
$ \mathfrak{g} \subset \mathfrak{X} (M) $. In fact, $M$ need not admit
a flat connection at all, as in the example of
$ \mathfrak{g} = \mathfrak{so}(3) $ acting transitively on
$ M =\mathbb{S} ^2 $, the $2$-sphere. The Cartan--Nomizu
characterization is therefore not the end of the story.

To include these examples in the algebraic framework, \citet{MuLu2013}
considered connections more general than affine connections: namely,
connections on a \emph{Lie algebroid} $ A \rightarrow M $, which is an
anchored vector bundle with a compatible Lie bracket on the space of
sections $ \Gamma (A) $. (An affine connection is just the special
case when $ A = T M $ is the tangent bundle with the Jacobi--Lie
bracket.) When equipped with a connection such that $ \Gamma (A) $ is
a pre-Lie algebra or post-Lie algebra, we say that $A$ is a
\emph{pre-Lie algebroid} or \emph{post-Lie algebroid}. In particular,
a $ \mathfrak{g} $-action on $M$ has an associated \emph{action
  algebroid} $ \mathfrak{g} \ltimes M \rightarrow M $; as a vector
bundle, this is just the trivial bundle with fiber $ \mathfrak{g} $
over $M$. \citet{MuLu2013} showed that the canonical flat connection
makes $ \mathfrak{g} \ltimes M $ into a post-Lie algebroid, and when
$ \mathfrak{g} $ is abelian, this is in fact a pre-Lie algebroid.

This previous work therefore gives sufficient conditions for
$A \rightarrow M $ to admit a (pre-Lie) post-Lie structure: it is
sufficient for $A$ to be an (abelian) action algebroid. The purpose of
the present work is to prove conditions that are \emph{both necessary
  and sufficient}, thereby giving a full characterization of the
spaces to which Butcher and Lie--Butcher series methods may be used
for numerical integration and analysis of flows on $M$.

\subsection{Overview}

The main results of this paper,
\autoref{thm:pre-Lie_iff_abelian_action} and
\autoref{thm:post-Lie_iff_action}, show that if $A \rightarrow M $ is
transitive, i.e., the anchor is surjective, then it admits a (pre-Lie)
post-Lie structure if and only if it is locally isomorphic to the
action algebroid of some transitive (abelian) $ \mathfrak{g} $-action
with its canonical flat connection, and this isomorphism is global
when $M$ is simply connected. (In the non-transitive case, this result
holds leaf-by-leaf on the foliation induced by the anchor.) This
generalizes the Cartan--Nomizu results stated above, which correspond
to the special case $ A = T M $.

Consequently, the \emph{only} way to apply Butcher and Lie--Butcher
series methods to manifolds, at least locally, is the one introduced
twenty years ago: equip the manifold with a transitive Lie algebra
action.

The paper is organized as follows:

\begin{itemize}
\item \autoref{sec:connections} begins by introducing a purely
  algebraic treatment of connections, relating Lie-admissible,
  pre-Lie, and post-Lie algebras of connections to curvature and
  torsion. We then bring geometry into the picture by applying this
  framework to algebras of affine connections on $M$, linking the
  pre-Lie and post-Lie conditions to the results of \citet{Cartan1927}
  and \citet{Nomizu1954}.

\item \autoref{sec:algebroids} considers connections on anchored
  bundles and Lie algebroids and gives necessary and sufficient
  conditions, in terms of curvature and torsion, for an algebroid to
  be Lie-admissible, pre-Lie, or post-Lie.

\item \autoref{sec:action} proves the main results, applying the
  framework of the previous sections to characterize transitive
  pre-Lie and post-Lie algebroids in terms of transitive
  $\mathfrak{g}$-actions on $M$. We also remark on the non-transitive
  case, in which these results hold leaf-by-leaf on the foliation of
  $M$ induced by the anchor, and compare our results to those of
  \citet{Blaom2006} and \citet{AbCr2012}, who drop the transitivity
  assumption but require strictly stronger conditions on the
  connection than the pre-Lie and post-Lie conditions.
\end{itemize} 

\section{Algebras of invariant connections}
\label{sec:connections}

In this section, we begin by considering a purely algebraic notion of
a connection as a non-associative product on a Lie algebra.  We then
recall the definitions of Lie-admissible, pre-Lie, and post-Lie
algebras, and we discuss the relationship between these algebras and
the curvature and torsion of the connection corresponding to the
product. Finally, we apply this framework to affine connections,
obtaining necessary and sufficient conditions for $M$ to admit a
connection giving $ \mathfrak{X} (M) $ a Lie-admissible, pre-Lie, or
post-Lie structure, and relating this to the Cartan--Nomizu
classification.

\subsection{Connections on Lie algebras}
\label{sec:lieAlgebraConnections}

Certain properties of the connections we wish to study are purely
algebraic, in the sense that they do not depend on any local or
geometric arguments. Therefore, we postpone geometry to subsequent
sections and begin in the following algebraic setting.

\begin{definition}
  Let $ \bigl( L , \llbracket \cdot , \cdot \rrbracket \bigr) $ be a
  Lie algebra over a field $ \Bbbk $ of characteristic zero. A
  \emph{connection} on $L$ is a $ \Bbbk $-linear map
  $ \nabla \colon L \rightarrow \operatorname{End} (L) $,
  $ X \mapsto \nabla _X $.\footnote{By $ \operatorname{ End } (L) $,
    we mean linear endomorphisms on $L$ as a vector space over
    $ \Bbbk $, not necessarily Lie algebra endomorphisms.}
  Equivalently, a connection corresponds to a $\Bbbk$-bilinear product
  $ \triangleright $ on $L$ defined by
  $ X \triangleright Y \coloneqq \nabla _X Y $.
\end{definition}

The Lie bracket on $L$ makes it possible to define algebraic notions
of curvature and torsion, which are formally identical to the familiar
definitions from differential geometry.

\begin{definition}
  Given a connection $ \nabla $ on
  $ \bigl( L , \llbracket \cdot , \cdot \rrbracket \bigr) $, its
  \emph{curvature} is the $ \Bbbk $-bilinear map
  $ R \colon L \times L \rightarrow \operatorname{ End } (L) $ given
  by
  \begin{equation}
    \label{eqn:R}
    R(X,Y) \coloneqq \nabla _X \nabla _Y - \nabla _Y \nabla _X - \nabla _{ \llbracket X, Y \rrbracket } ,
  \end{equation}
  and its \emph{torsion} is the $ \Bbbk $-bilinear map
  $ T \colon L \times L \rightarrow L $ given by
  \begin{equation}
    \label{eqn:T}
    T ( X, Y ) \coloneqq \nabla _X Y - \nabla _Y X - \llbracket X, Y \rrbracket .
  \end{equation}
  If $ R = 0 $, the connection is \emph{flat}, and if $ T = 0 $, it is
  \emph{torsion-free}.
\end{definition}

\begin{remark}
  A representation of a Lie algebra is precisely a flat connection.
\end{remark}

Covariant derivatives $ \nabla R $ and $ \nabla T $ are defined by the
usual product rules,
\begin{align}
  (\nabla _Z R) (X,Y) W
  &\coloneqq \nabla _Z \bigl( R ( X, Y ) W \bigr)  \label{eq:NR} \\
  &\quad  - R ( \nabla _Z X, Y ) W - R ( X, \nabla _Z Y ) W - R ( X, Y ) \nabla _Z W , \notag \\
  ( \nabla _Z T ) ( X, Y ) &\coloneqq \nabla _Z \bigl( T ( X, Y) \bigr) - T ( \nabla _Z X , Y ) - T ( X , \nabla _Z Y ) \label{eq:NT}.
\end{align}
 The curvature (resp., torsion) is \emph{parallel}
if $ \nabla R = 0 $ (resp., $ \nabla T = 0 $), and if both the
curvature and torsion are parallel, we say that $ \nabla $ is an
\emph{invariant connection}.

Associated to each $ \nabla $ is a \emph{dual connection}\footnote{For
  connections on a Lie algebroid, this is the notation used by
  \citet{CrFe2003}; the name \emph{dual connection} appears in
  \citet{Blaom2006}, who denotes it by $ \nabla ^\ast $.}
$ \overline{ \nabla } _X Y \coloneqq \nabla _Y X + \llbracket X, Y
\rrbracket $, which is seen to satisfy
$ \overline{ \overline{ \nabla } } = \nabla $. The curvature and
torsion of $ \overline{ \nabla } $ are denoted by $ \overline{ R } $
and $ \overline{ T } $. Observe that $T$ and $ \overline{ T } $ are related by
\begin{equation}
  \label{eqn:T_Tbar}
  T ( X, Y ) = \nabla _X Y - \overline{ \nabla } _X Y = - \overline{ T } ( X, Y ),
\end{equation} 
so the torsion expresses the difference between the primal and dual
connections, and a connection is its own dual if and only if it is
torsion-free. In particular, the connection
$ \widetilde{ \nabla } \coloneqq \frac{1}{2} ( \nabla + \overline{
  \nabla } ) $ is always torsion-free.

\begin{example}
  On any Lie algebra, we may define the trivial connection
  $ \nabla _X Y = 0 $, which has the dual connection
  $ \overline{ \nabla } _X Y = \llbracket X, Y \rrbracket $. We see
  that $ R = 0 $ trivially and $ \overline{ R } = 0 $ by the Jacobi
  identity, and indeed $ \nabla $ and $ \overline{ \nabla } $ are Lie
  algebra representations: the trivial representation and adjoint
  representation, respectively.
\end{example}

\begin{proposition}\label{prop:covtor}
  For a connection $ \nabla $ on a Lie algebra, we have
  \begin{equation}
    \label{eqn:bianchi1}
    (\nabla _Z T)(X,Y)
    =\overline{ R } ( X, Y)  Z + R (Y,Z) X + R(Z,X) Y. 
  \end{equation} 
\end{proposition}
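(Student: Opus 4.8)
The plan is to prove \eqref{eqn:bianchi1} by expanding both sides directly, using only the definitions \eqref{eqn:R}, \eqref{eqn:T}, \eqref{eq:NT}, the formula $\overline{\nabla}_A B = \nabla_B A + \llbracket A, B \rrbracket$ for the dual connection, and — in exactly one place — the Jacobi identity for $\llbracket \cdot , \cdot \rrbracket$. First I would write out the left-hand side: combining \eqref{eq:NT} with \eqref{eqn:T} gives
\begin{align*}
  (\nabla _Z T)(X,Y) &= \nabla _Z \nabla _X Y - \nabla _Z \nabla _Y X - \nabla _Z \llbracket X,Y \rrbracket \\
  &\quad - \nabla _{\nabla _Z X} Y + \nabla _Y \nabla _Z X + \llbracket \nabla _Z X , Y \rrbracket \\
  &\quad - \nabla _X \nabla _Z Y + \nabla _{\nabla _Z Y} X + \llbracket X , \nabla _Z Y \rrbracket ,
\end{align*}
a sum of nine terms.

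Next I would expand the right-hand side. The two curvature terms $R(Y,Z)X$ and $R(Z,X)Y$ are read directly off \eqref{eqn:R}, contributing six terms (one of which I would rewrite using $\nabla _{\llbracket Z,X \rrbracket} = - \nabla _{\llbracket X,Z \rrbracket}$). For $\overline{R}(X,Y)Z = \overline{\nabla}_X \overline{\nabla}_Y Z - \overline{\nabla}_Y \overline{\nabla}_X Z - \overline{\nabla}_{\llbracket X,Y \rrbracket} Z$, I would substitute $\overline{\nabla}_A B = \nabla _B A + \llbracket A,B \rrbracket$ twice and multiply out, producing ten terms, among them the three iterated brackets $\llbracket X, \llbracket Y,Z \rrbracket \rrbracket - \llbracket Y, \llbracket X,Z \rrbracket \rrbracket - \llbracket \llbracket X,Y \rrbracket , Z \rrbracket$. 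In the resulting total of sixteen terms, the pairs $\pm \nabla _{\llbracket Y,Z \rrbracket} X$ and $\pm \nabla _{\llbracket X,Z \rrbracket} Y$ cancel outright, the three iterated brackets sum to zero by the Jacobi identity, and the remaining nine terms coincide exactly with the nine terms of $(\nabla _Z T)(X,Y)$ displayed above (matching $\llbracket \nabla _Z X , Y \rrbracket$ with $- \llbracket Y , \nabla _Z X \rrbracket$). This establishes the identity.

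Since the whole argument is a finite manipulation, the only genuine input beyond formal rewriting is the single application of the Jacobi identity to the three double-bracket terms; everything else is a matter of matching monomials and signs. I therefore expect the main obstacle to be purely organizational — keeping track of which term cancels against which, sorted by "shape" ($\nabla \nabla$-terms, terms $\nabla _{\nabla _\bullet \bullet}\bullet$, $\nabla _\bullet \llbracket \cdot , \cdot \rrbracket$, $\llbracket \nabla _\bullet \bullet , \bullet \rrbracket$, $\nabla _{\llbracket \cdot , \cdot \rrbracket}\bullet$, and $\llbracket \bullet , \llbracket \cdot , \cdot \rrbracket \rrbracket$). It may be worth isolating once and for all the expansion of $\overline{R}(X,Y)Z$ as a sum of $\nabla$'s and brackets, so that it can be quoted cleanly both here and in later arguments about $\overline{R}$.
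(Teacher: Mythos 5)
Your proposal is correct and is essentially the paper's own argument: both amount to a direct expansion using the definitions of $R$, $T$, $\nabla T$, the dual-connection formula $\overline{\nabla}_A B = \nabla_B A + \llbracket A,B\rrbracket$, and a single application of the Jacobi identity to the three iterated-bracket terms. The only difference is organizational — the paper expands the left-hand side and regroups it into $\overline{\nabla}\,\overline{\nabla}$ pieces, whereas you expand the right-hand side and match monomials — but the underlying computation is identical.
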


\begin{proof}
  Consider the three terms defining $ ( \nabla _Z T ) ( X, Y) $ in
  \eqref{eq:NT}.  First,
  \begin{align*} 
    \nabla _Z \bigl(T ( X, Y )\bigr)
    &= \nabla _Z \nabla _X Y - \nabla _Z \nabla _Y X - \nabla _Z \llbracket X, Y \rrbracket \\
    &= \nabla _Z \nabla _X Y - \nabla _Z \nabla _Y X - \overline{ \nabla } _{ \llbracket X, Y \rrbracket } Z - \bigl\llbracket Z, \llbracket X, Y \rrbracket \bigr\rrbracket .
  \end{align*}
  For the second term,
  \begin{align*}
    T(\nabla _Z X, Y )
    &= \nabla _{ \nabla _Z X } Y - \nabla _Y \nabla _Z X - \llbracket \nabla _Z X , Y \rrbracket \\
    &= \overline{ \nabla } _Y \nabla _Z X - \nabla _Y \nabla _Z X \\
    &= \overline{ \nabla } _Y \bigl( \overline{ \nabla } _X Z + \llbracket Z, X \rrbracket \bigr) - \nabla _Y \nabla _Z X \\
    &= \overline{ \nabla } _Y \overline{ \nabla } _X Z - \nabla _Y \nabla _Z X + \nabla _{\llbracket Z, X \rrbracket} Y  + \bigl\llbracket Y, \llbracket Z, X \rrbracket \bigr\rrbracket ,
  \end{align*}
  and likewise for the third,
  \begin{equation*}
    T(X, \nabla _Z Y )
    = \nabla _X \nabla _Z Y - \overline{ \nabla } _X \overline{ \nabla } _Y Z + \nabla _{ \llbracket Y, Z \rrbracket } X + \bigl\llbracket X, \llbracket Y, Z \rrbracket \bigr\rrbracket .
  \end{equation*} 
  Combining these and applying the Jacobi identity gives
  \eqref{eqn:bianchi1}.
\end{proof}

\begin{corollary}
  \label{cor:Rbar_NT}
  Assuming $R = 0 $, we have $ \overline{ R } = 0 $ if and only if
  $ \nabla T = 0 $.
\end{corollary}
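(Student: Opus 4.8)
The plan is to read this off directly from \autoref{prop:covtor}, the algebraic first Bianchi identity, which relates $\nabla T$ to the curvatures of the primal and dual connections. Specializing \eqref{eqn:bianchi1} to the hypothesis $R = 0$ annihilates the last two terms on the right-hand side, leaving
\begin{equation*}
  (\nabla_Z T)(X,Y) = \overline{R}(X,Y) Z \qquad \text{for all } X, Y, Z \in L .
\end{equation*}
Both implications then follow from this single identity.

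First I would dispose of the easy direction: if $\overline{R} = 0$, the right-hand side vanishes for every $Z$, so $\nabla_Z T = 0$ for all $Z$, i.e.\ $\nabla T = 0$. For the converse, suppose $\nabla T = 0$. Then $\overline{R}(X,Y) Z = 0$ for all $X, Y, Z \in L$; since $Z$ ranges over all of $L$, this says precisely that the endomorphism $\overline{R}(X,Y) \in \operatorname{End}(L)$ is zero for every $X, Y$, that is, $\overline{R} = 0$. The mild point worth noting is that passing from ``$\overline{R}(X,Y)Z = 0$ for all $Z$'' to ``$\overline{R}(X,Y) = 0$'' uses that $\overline{R}$ takes values in $\operatorname{End}(L)$, evaluated against arbitrary $Z$, which is exactly the content of its definition in \eqref{eqn:R}.

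I do not anticipate any real obstacle: \autoref{prop:covtor} is stated for an arbitrary connection on any Lie algebra, so it applies verbatim, and the corollary is a formal consequence. (One could instead expand $R$, $\overline{R}$, and $\nabla T$ in terms of $\nabla$ and $\llbracket \cdot, \cdot \rrbracket$ and invoke the Jacobi identity directly, but that just reproves \eqref{eqn:bianchi1}, so citing the proposition is cleaner.)
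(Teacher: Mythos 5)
Your proof is correct and is exactly the argument the paper intends: the corollary is stated without proof precisely because it follows immediately from specializing \eqref{eqn:bianchi1} to $R = 0$, which is what you do. Both directions, including the observation that $\overline{R}(X,Y)Z = 0$ for all $Z$ forces $\overline{R}(X,Y) = 0$ in $\operatorname{End}(L)$, are handled correctly.
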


Cyclic sums of trilinear functions will appear repeatedly. We denote
\begin{equation*}
  \csum f (X,Y,Z) \coloneqq f (X,Y,Z) + f(Y,Z,X) + f(Z,X,Y).
\end{equation*}
For example, the Jacobi identity may be written as
$ \csum \bigl\llbracket X, \llbracket Y, Z \rrbracket \bigr\rrbracket
$.

\begin{proposition}\label{prop:cyclicT} 
  For a connection $ \nabla $ on a Lie algebra, we have
  \begin{equation}
    \label{eqn:bianchi2}
    \csum T \bigl( X, T(Y,Z) \bigr) = \csum R ( X, Y ) Z  + \csum\overline{R} ( X, Y ) Z .
  \end{equation}
\end{proposition}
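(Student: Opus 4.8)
The plan is to recognize \eqref{eqn:bianchi2} as the algebraic form of the \emph{first Bianchi identity} for a general connection, combined with \autoref{prop:covtor}. Concretely, I would first establish the purely algebraic identity
\[
  \csum R(X,Y)Z = \csum (\nabla _X T)(Y,Z) + \csum T\bigl( T(X,Y), Z \bigr),
\]
whose proof is formally identical to the classical one: expand $(\nabla _X T)(Y,Z)$ using \eqref{eq:NT} and $T\bigl(T(X,Y),Z\bigr)$ using \eqref{eqn:T}, isolate the combination $\nabla _X \nabla _Y Z - \nabla _Y \nabla _X Z - \nabla _{\llbracket X,Y\rrbracket} Z = R(X,Y)Z$, and check that every remaining $\nabla$-term cancels against another once the cyclic sum is taken, the surviving contribution being a cyclic sum of double brackets that vanishes by the Jacobi identity. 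This is the same kind of bookkeeping as in the proof of \autoref{prop:covtor} and introduces no new idea.

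Next I would rewrite both sides in terms of $R$ and $\overline R$. Since $T$ is antisymmetric, reindexing shows $\csum T\bigl(T(X,Y),Z\bigr) = -\csum T\bigl(X,T(Y,Z)\bigr)$, so the identity above becomes $\csum T(X,T(Y,Z)) = \csum(\nabla _X T)(Y,Z) - \csum R(X,Y)Z$. Separately, summing \eqref{eqn:bianchi1} cyclically over $(X,Y,Z)$ and using $\csum R(Y,Z)X = \csum R(Z,X)Y = \csum R(X,Y)Z$ gives $\csum(\nabla _X T)(Y,Z) = \csum \overline R(X,Y)Z + 2 \csum R(X,Y)Z$. Substituting this into the previous line reduces the coefficient of $\csum R(X,Y)Z$ from $2$ to $1$ and yields exactly \eqref{eqn:bianchi2}.

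The only real obstacle is sign- and index-bookkeeping: one has to track which permutation of the three arguments a cyclic sum induces — so that, for instance, $\csum R(Y,Z)X$ is correctly identified with $\csum R(X,Y)Z$ — and use the antisymmetry of $R$ and $T$ consistently. A more self-contained variant skips naming the first Bianchi identity: expand $\csum T(X,T(Y,Z))$ directly from \eqref{eqn:T}, split off the term $\csum(\nabla _X T)(Y,Z)$ via \eqref{eq:NT} and rewrite it by \autoref{prop:covtor}, then verify that the leftover $\nabla$- and bracket terms sum cyclically to $-\csum R(X,Y)Z$; this is, however, the same computation in disguise.
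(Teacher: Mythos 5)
Your proof is correct, but it takes a genuinely different route from the paper's. The paper exploits the primal/dual decomposition $T(Z,W) = \nabla_Z W - \overline{\nabla}_Z W$: setting $W = T(X,Y)$ gives $T\bigl(Z,T(X,Y)\bigr) = \nabla_Z\bigl(T(X,Y)\bigr) + \overline{\nabla}_Z\bigl(\overline{T}(X,Y)\bigr)$, and both terms are expanded by reusing the formula for $\nabla_Z\bigl(T(X,Y)\bigr)$ already computed in the proof of \autoref{prop:covtor} together with its $\overline{\nabla}$-counterpart; the cyclic sum and the Jacobi identity then finish the job in a few lines. You instead prove the first Bianchi identity \eqref{eq:B1} independently by direct expansion and combine it with the cyclic sum of \eqref{eqn:bianchi1}. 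This reverses the paper's logical order --- there, \eqref{eq:B1} is a \emph{corollary} of \eqref{eqn:bianchi1} and \eqref{eqn:bianchi2} --- so you must be explicit (as you are) that your proof of \eqref{eq:B1} is the standalone expansion, not the paper's corollary, to avoid circularity. I checked that the direct expansion does go through in the purely algebraic setting (everything cancels under the cyclic sum except $\csum\bigl\llbracket\llbracket X,Y\rrbracket,Z\bigr\rrbracket$, which vanishes by Jacobi), and your bookkeeping steps --- $\csum T\bigl(T(X,Y),Z\bigr) = -\csum T\bigl(X,T(Y,Z)\bigr)$ and $\csum R(Y,Z)X = \csum R(Z,X)Y = \csum R(X,Y)Z$, giving $\csum(\nabla_X T)(Y,Z) = \csum\overline{R}(X,Y)Z + 2\csum R(X,Y)Z$ --- are all correct. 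What each approach buys: the paper's is shorter and showcases the $\nabla$/$\overline{\nabla}$ symmetry that organizes the whole section, while yours makes the link to the classical torsion-ful first Bianchi identity explicit and shows that any two of the three identities \eqref{eqn:bianchi1}, \eqref{eqn:bianchi2}, \eqref{eq:B1} imply the third, at the cost of one extra lengthy (though routine) expansion.
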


\begin{proof}
  From \eqref{eqn:T_Tbar}, we have
  \begin{align*}
    T \bigl( Z, T ( X,Y) \bigr)
    &= \nabla _Z \bigl(T ( X,Y )\bigr) + \overline{ \nabla } _Z\bigl( \overline{ T } ( X, Y )\bigr) \\
    &= \nabla _Z \nabla _X Y - \nabla _Z \nabla _Y X - \overline{ \nabla } _{ \llbracket X, Y \rrbracket } Z - \bigl\llbracket Z, \llbracket X, Y \rrbracket \bigr\rrbracket \\
    &\quad + \overline{ \nabla } _Z \overline{ \nabla} _X Y - \overline{ \nabla } _Z \overline{ \nabla } _Y X -  \nabla _{ \llbracket X, Y \rrbracket } Z - \bigl\llbracket Z, \llbracket X, Y \rrbracket \bigr\rrbracket .
  \end{align*}
  The last equality comes from the expression obtained for
  $\nabla _Z \bigl(T ( X,Y )\bigr)$ in the previous proof, together
  with the corresponding version for $ \overline{ \nabla } $. Taking
  the cyclic sum of both sides and applying the Jacobi identity gives
  \eqref{eqn:bianchi2}.
\end{proof}

\begin{corollary}
  \label{cor:T_liebracket}
  If $ R = \overline{ R } = 0 $, then $ T $ is a Lie
  bracket.
\end{corollary}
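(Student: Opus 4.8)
The plan is to recall that a Lie bracket on $L$ is nothing more than a $\Bbbk$-bilinear, skew-symmetric map $L\times L\to L$ satisfying the Jacobi identity, and to verify these three properties for $T$ in turn. Bilinearity is immediate from the defining formula \eqref{eqn:T}: the connection $\nabla$ (equivalently the product $\triangleright$) is $\Bbbk$-bilinear by definition, and $\llbracket\cdot,\cdot\rrbracket$ is bilinear, so $T(X,Y)=\nabla_X Y-\nabla_Y X-\llbracket X,Y\rrbracket$ is bilinear in $(X,Y)$. Skew-symmetry is likewise visible directly from \eqref{eqn:T}, since interchanging $X$ and $Y$ sends $\nabla_X Y-\nabla_Y X$ to its negative and $\llbracket X,Y\rrbracket$ to its negative; equivalently, one may cite \eqref{eqn:T_Tbar} together with $\overline{\overline{\nabla}}=\nabla$, which gives $T(X,Y)=-\overline{T}(X,Y)=-T(Y,X)$.

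The only substantive point is the Jacobi identity, and this is exactly the content of \autoref{prop:cyclicT}. Specializing \eqref{eqn:bianchi2} to the hypothesis $R=\overline{R}=0$ kills the entire right-hand side, leaving $\csum T\bigl(X,T(Y,Z)\bigr)=0$, i.e., $T\bigl(X,T(Y,Z)\bigr)+T\bigl(Y,T(Z,X)\bigr)+T\bigl(Z,T(X,Y)\bigr)=0$, which is precisely the Jacobi identity for $T$ regarded as a bracket. Combining this with the bilinearity and skew-symmetry noted above shows that $\bigl(L,T\bigr)$ is a Lie algebra.

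I do not expect any genuine obstacle: the corollary is an immediate consequence of \autoref{prop:cyclicT}. The one thing worth flagging is that bilinearity and skew-symmetry of $T$ hold for \emph{any} connection, with no flatness assumption whatsoever; it is only the Jacobi identity that consumes the hypothesis $R=\overline{R}=0$. (As a side remark, since $T=-\overline{T}$ by \eqref{eqn:T_Tbar}, the same conclusion holds verbatim for the torsion $\overline{T}$ of the dual connection.)
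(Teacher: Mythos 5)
Your proof is correct and follows exactly the paper's own argument: bilinearity and skew-symmetry of $T$ are immediate from \eqref{eqn:T}, and the Jacobi identity follows from \eqref{eqn:bianchi2} once the hypothesis $R=\overline{R}=0$ annihilates the right-hand side. Nothing further is needed.
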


\begin{proof}
  By definition, $T$ is always bilinear and skew-symmetric. If
  $ R = \overline{ R } = 0 $, the right-hand side of
  \eqref{eqn:bianchi2} vanishes, so $T$ also satisfies the Jacobi
  identity.
\end{proof}

\begin{corollary}[Bianchi's first identity]
  For a connection $ \nabla $ on a Lie algebra, we have
  \begin{equation}
    \csum (\nabla_X T) (Y,Z) = \csum R(X,Y)Z+\csum T\bigl(X,T(Y,Z)\bigr)\label{eq:B1}
  \end{equation} 
\end{corollary}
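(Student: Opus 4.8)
The plan is to derive \eqref{eq:B1} purely formally from \autoref{prop:covtor} and \autoref{prop:cyclicT}, without ever returning to the definitions of $ R $, $ \overline{R} $, or $ T $. The one fact I will use repeatedly is that $ \csum $ is, by construction, invariant under cyclic permutation of its three arguments, so a trilinear expression and any of its cyclic relabelings yield the same cyclic sum.

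First I would put \eqref{eqn:bianchi1} into the shape that appears on the left-hand side of \eqref{eq:B1}. Cyclically relabeling $ (Z,X,Y) \mapsto (X,Y,Z) $ in \eqref{eqn:bianchi1} gives
\[
  (\nabla _X T)(Y,Z) = \overline{R}(Y,Z)X + R(Z,X)Y + R(X,Y)Z .
\]
Applying $ \csum $ to both sides and using cyclic invariance to rewrite $ \csum \overline{R}(Y,Z)X = \csum \overline{R}(X,Y)Z $ and $ \csum R(Z,X)Y = \csum R(X,Y)Z $, I obtain
\[
  \csum (\nabla _X T)(Y,Z) = \csum \overline{R}(X,Y)Z + 2\csum R(X,Y)Z .
\]

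Finally I would eliminate the $ \overline{R} $-term with \eqref{eqn:bianchi2}, which rearranges to $ \csum \overline{R}(X,Y)Z = \csum T\bigl(X,T(Y,Z)\bigr) - \csum R(X,Y)Z $. Substituting this into the previous display collapses the coefficient $ 2 $ to $ 1 $ and produces exactly \eqref{eq:B1}. There is no genuine analytic or algebraic obstacle in this argument; the entire content is careful bookkeeping of cyclic relabelings, and the only point that really needs attention is tracking the coefficient of $ \csum R(X,Y)Z $: the two "extra" copies generated by cyclically summing \autoref{prop:covtor} must combine with the single negative copy coming from \autoref{prop:cyclicT} to leave precisely one.
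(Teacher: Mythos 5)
Your proof is correct and is essentially the paper's own argument: the paper's proof is exactly ``take the cyclic sum of both sides of \eqref{eqn:bianchi1} and apply \eqref{eqn:bianchi2},'' and your write-up simply makes the cyclic relabelings and the bookkeeping of the coefficient of $\csum R(X,Y)Z$ explicit. No issues.
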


\begin{proof}
  Take the cyclic sum of both sides of \eqref{eqn:bianchi1} and apply
  \eqref{eqn:bianchi2}.
\end{proof}

\begin{remark}
  Bianchi's second identity,
  \begin{equation*}
    \csum (\nabla_XR)(Y,Z) =\csum R \bigl( X,T(Y,Z) \bigr)  ,
  \end{equation*} 
  also holds in this setting. The proof is a lengthy calculation which
  we omit, since we will not need this identity in the sequel.
\end{remark}

\subsection{Lie-admissible, pre-Lie, and post-Lie algebras of connections}

We now consider Lie-admissible, pre-Lie, and post-Lie algebraic
structures on an algebra $ ( \mathcal{A}, \triangleright ) $. For each
of these, we show that the product $ \triangleright $ may be
interpreted as a connection on a Lie algebra, and we characterize
these algebraic structures in terms of the curvature and torsion of
this connection.

The \emph{associator} of the product $ \triangleright $ is denoted by
\begin{equation*}
  a (X, Y, Z ) \coloneqq X \triangleright ( Y \triangleright Z ) - ( X \triangleright Y ) \triangleright Z ,
\end{equation*}
following the sign convention of \citet{MuLu2013}. In the sequel, an
important role is played by the \emph{associator triple bracket},
\begin{equation*}
  [X,Y,Z] \coloneqq a (X, Y, Z ) - a (Y, X, Z ) .
\end{equation*}
When $ \triangleright $ corresponds to a connection on a Lie algebra,
the following useful identity relates the associator triple bracket to
curvature and torsion.

\begin{proposition}
  \label{prop:triple}
  For a connection $ \nabla $ on a Lie algebra, we have
  \begin{equation*}
    [X,Y,Z] = R(X,Y) Z - T ( X, Y ) \triangleright Z .
  \end{equation*} 
\end{proposition}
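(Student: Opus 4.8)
The plan is to expand the associator triple bracket directly from the definitions, translating $\triangleright$ back into $\nabla$ and then matching the resulting terms against \eqref{eqn:R} and \eqref{eqn:T}. Writing $X \triangleright Y = \nabla_X Y$, the associator becomes $a(X,Y,Z) = \nabla_X \nabla_Y Z - \nabla_{\nabla_X Y} Z$, so that after forming $[X,Y,Z] = a(X,Y,Z) - a(Y,X,Z)$ and regrouping,
\begin{equation*}
  [X,Y,Z] = \bigl( \nabla_X \nabla_Y Z - \nabla_Y \nabla_X Z \bigr) + \bigl( \nabla_{\nabla_Y X} Z - \nabla_{\nabla_X Y} Z \bigr).
\end{equation*}

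The first parenthesized group is, by the definition \eqref{eqn:R} of curvature, exactly $R(X,Y)Z + \nabla_{\llbracket X, Y \rrbracket} Z$. For the second group I would use that $\nabla$ is $\Bbbk$-linear in its subscript, together with the definition \eqref{eqn:T} of torsion rewritten as $\nabla_Y X - \nabla_X Y = - T(X,Y) - \llbracket X, Y \rrbracket$; this yields $\nabla_{\nabla_Y X} Z - \nabla_{\nabla_X Y} Z = - \nabla_{T(X,Y)} Z - \nabla_{\llbracket X, Y \rrbracket} Z = - T(X,Y) \triangleright Z - \nabla_{\llbracket X, Y \rrbracket} Z$.

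Adding the two groups, the terms $\pm \nabla_{\llbracket X, Y \rrbracket} Z$ cancel, leaving $[X,Y,Z] = R(X,Y)Z - T(X,Y) \triangleright Z$, as claimed.

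This argument is purely a matter of unwinding definitions and, unlike the Bianchi-type identities above, does not invoke the Jacobi identity at all. Accordingly I do not anticipate any genuine obstacle; the only point requiring care is the sign bookkeeping in the subscript-linear term, namely that the $\llbracket X, Y \rrbracket$ contribution coming from curvature and the one coming from torsion enter with opposite signs so as to cancel.
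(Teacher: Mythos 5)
Your proof is correct and is essentially the paper's own argument run in the opposite direction: the paper expands $R(X,Y)Z$ and $T(X,Y)\triangleright Z$ in terms of $\triangleright$ and subtracts, while you expand $[X,Y,Z]$ and regroup, with the same cancellation of the $\nabla_{\llbracket X, Y \rrbracket} Z$ terms via linearity of the connection in its subscript. No gaps.
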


\begin{proof}
  By definition of $R$ and $T$ and the linearity of the connection,
  \begin{align*}
    R(X,Y) Z &= X \triangleright ( Y \triangleright Z ) - Y \triangleright ( X \triangleright Z ) - \llbracket X, Y \rrbracket \triangleright Z ,\\
    T(X,Y) \triangleright Z &= ( X \triangleright Y ) \triangleright Z - ( Y \triangleright X ) \triangleright Z - \llbracket X, Y \rrbracket \triangleright Z ,
  \end{align*}
  so subtracting gives $ [ X, Y, Z ] $.
\end{proof}

\subsubsection{Lie-admissible algebras}

The definition of a Lie-admissible algebra is due to
\citet{Albert1948}.

\begin{definition}
  An algebra $ ( \mathcal{A}, \triangleright ) $ is
  \emph{Lie-admissible} if $ \csum [X,Y,Z] = 0 $.
\end{definition}

Such algebras are called ``Lie-admissible'' due to the following equivalence.

\begin{proposition}
  \label{prop:lieadmissible_bracket}
  $ ( \mathcal{A}, \triangleright ) $ is Lie-admissible if and only if
  the commutator bracket
  $ \llbracket X, Y \rrbracket \coloneqq X \triangleright Y - Y
  \triangleright X $ is a Lie bracket on $ \mathcal{A} $.
\end{proposition}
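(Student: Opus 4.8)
The plan is to reduce to the Jacobi identity. The commutator bracket $\llbracket X, Y \rrbracket = X \triangleright Y - Y \triangleright X$ is manifestly $\Bbbk$-bilinear and skew-symmetric for any product $\triangleright$, so $\llbracket \cdot, \cdot \rrbracket$ is a Lie bracket if and only if it satisfies the Jacobi identity, i.e., $\csum \bigl\llbracket X, \llbracket Y, Z \rrbracket \bigr\rrbracket = 0$. Hence the proposition will follow immediately once I establish the purely formal identity
\begin{equation*}
  \csum \bigl\llbracket X, \llbracket Y, Z \rrbracket \bigr\rrbracket = \csum [X,Y,Z],
\end{equation*}
since then the left-hand side vanishes identically exactly when the right-hand side does, which is the definition of Lie-admissibility.

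To prove this identity I would expand both sides in terms of $\triangleright$ and match terms. On the right, writing $[X,Y,Z] = a(X,Y,Z) - a(Y,X,Z)$ and unfolding the associator, each summand is a signed combination of four triple products: two ``left-nested'' ones of the form $W \triangleright (U \triangleright V)$ and two ``right-nested'' ones of the form $(U \triangleright V) \triangleright W$; the bracket terms $\llbracket \cdot, \cdot \rrbracket \triangleright Z$ never appear because the two associators share the same anchor $Z$ in the third slot. On the left, expanding $\bigl\llbracket X, \llbracket Y, Z \rrbracket \bigr\rrbracket = X \triangleright (Y \triangleright Z) - X \triangleright (Z \triangleright Y) - (Y \triangleright Z) \triangleright X + (Z \triangleright Y) \triangleright X$ and taking the cyclic sum produces exactly the same six left-nested and six right-nested triple products, with the same signs. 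Comparing the two cyclic sums term by term then gives the identity; notably, no algebraic axiom and no cancellation is used — it is a formal identity in the free nonassociative algebra on $X,Y,Z$ (equivalently, the classical fact that the Jacobiator of the commutator of any bilinear product equals the cyclic sum of associators).

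I do not expect a genuine obstacle here: the only work is the bookkeeping in the term-by-term comparison of the two cyclic sums, which is entirely routine. If one prefers, the argument can be shortened by invoking the aforementioned classical identity directly.
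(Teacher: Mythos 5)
Your proposal is correct and matches the paper's own proof, which likewise reduces the claim to the identity $\csum \bigl\llbracket X, \llbracket Y, Z \rrbracket \bigr\rrbracket = \csum [X,Y,Z]$ (stated there as ``a short calculation'') after noting that bilinearity and skew-symmetry are automatic. Your term-by-term expansion correctly supplies that calculation: both sides unfold to the same twelve triple products with matching signs.
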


\begin{proof}
  The commutator bracket is always skew-symmetric and bilinear.  A
  short calculation shows that
  $ \csum \bigl\llbracket X, \llbracket Y, Z \rrbracket
  \bigr\rrbracket = \csum [X,Y,Z] $, so the Jacobi identity is
  equivalent to the Lie-admissibility condition.
\end{proof}
  
\begin{proposition}\label{prop:lieadm}
  The following are equivalent:
  \begin{enumerate}[label=(\roman*)]
  \item $ ( \mathcal{A}, \triangleright ) $ is a Lie-admissible algebra.
  \item
    $ \bigl( \mathcal{A}, \llbracket \cdot , \cdot \rrbracket \bigr) $
    is a Lie algebra with a torsion-free connection $ \nabla $.
  \end{enumerate} 
\end{proposition}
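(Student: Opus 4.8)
The plan is to take the product $ \triangleright $ itself as the connection, setting $ \nabla _X Y \coloneqq X \triangleright Y $ on the underlying vector space $ \mathcal{A} $, and to observe that the torsion-free condition with respect to a given bracket is nothing but the assertion that this bracket coincides with the commutator bracket $ \llbracket X, Y \rrbracket = X \triangleright Y - Y \triangleright X $. With that observation in hand, both implications reduce immediately to \autoref{prop:lieadmissible_bracket}.

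For the direction (i) $ \Rightarrow $ (ii), I would start from Lie-admissibility of $ ( \mathcal{A}, \triangleright ) $, invoke \autoref{prop:lieadmissible_bracket} to conclude that the commutator bracket is a Lie bracket, so that $ \bigl( \mathcal{A}, \llbracket \cdot , \cdot \rrbracket \bigr) $ is a Lie algebra, and then check that $ \nabla $ as defined above is a $ \Bbbk $-linear connection whose torsion, by \eqref{eqn:T}, is $ T ( X, Y ) = X \triangleright Y - Y \triangleright X - \llbracket X, Y \rrbracket $; this vanishes by the very definition of the commutator bracket, so $ \nabla $ is torsion-free.

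For the converse (ii) $ \Rightarrow $ (i), given a Lie algebra $ \bigl( \mathcal{A}, \llbracket \cdot , \cdot \rrbracket \bigr) $ equipped with a torsion-free connection $ \nabla $, I would set $ X \triangleright Y \coloneqq \nabla _X Y $ and read off from $ T = 0 $, again via \eqref{eqn:T}, that $ \nabla _X Y - \nabla _Y X = \llbracket X, Y \rrbracket $. Thus the commutator bracket of $ \triangleright $ is precisely the given bracket, which by hypothesis satisfies the Jacobi identity, so \autoref{prop:lieadmissible_bracket} yields that $ ( \mathcal{A}, \triangleright ) $ is Lie-admissible.

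There is no substantive obstacle here; the statement is essentially a repackaging of the definitions of connection and torsion. The one point requiring care is the bookkeeping of brackets: the Lie algebra structure appearing in (ii) is forced to be the commutator bracket of $ \triangleright $, and it is exactly the torsion-free hypothesis that enforces this identification, so that the two conditions line up precisely rather than merely up to some other choice of bracket.
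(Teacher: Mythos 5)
Your argument is correct and is essentially the paper's own proof, just spelled out in more detail: the paper likewise observes that $T = 0$ says precisely that $\llbracket X, Y\rrbracket$ is the commutator bracket of $\triangleright$, and leaves the appeal to \autoref{prop:lieadmissible_bracket} implicit. Nothing to add.
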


\begin{proof}
  $ T (X, Y ) = 0 $ says precisely that
  $ \llbracket X, Y \rrbracket = X \triangleright Y - Y \triangleright
  X $.
\end{proof}

\subsubsection{Pre-Lie algebras}

The notion of pre-Lie algebra appears in work of \citet{Vinberg1963}
in differential geometry and \citet{Gerstenhaber1963} in algebra. They
also appear in the work of \citet{AgGa1981} in control theory, under
the name ``chronological algebras.''

\begin{definition}
  An algebra $ ( \mathcal{A} , \triangleright ) $ is \emph{pre-Lie} if
  $ [X, Y, Z ] = 0 $.
\end{definition}

It follows immediately from this definition that every pre-Lie algebra
is a Lie-admissible algebra, so $ \triangleright $ corresponds to a
torsion-free connection $ \nabla $ on
$ \bigl( \mathcal{A}, \llbracket \cdot , \cdot \rrbracket \bigr) $,
where $ \llbracket \cdot , \cdot \rrbracket $ is the commutator
bracket. The next result shows that the pre-Lie condition corresponds
to the case where $ \nabla $ is also flat.

\begin{proposition}\label{prop:preLie}
  The following are equivalent:
  \begin{enumerate}[label=(\roman*)]
  \item $ ( \mathcal{A}, \triangleright ) $ is a pre-Lie algebra.
  \item
    $ \bigl( \mathcal{A}, \llbracket \cdot , \cdot \rrbracket \bigr) $
    is a Lie algebra with a flat and torsion-free connection $ \nabla $.
  \end{enumerate} 
\end{proposition}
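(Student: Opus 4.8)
The plan is to leverage the preceding results so that only the curvature condition is new. Starting from (i), the pre-Lie condition $[X,Y,Z] = 0$ for all $X,Y,Z$ immediately implies the Lie-admissibility condition $\csum [X,Y,Z] = 0$, so by \autoref{prop:lieadm} the commutator bracket $\llbracket \cdot, \cdot \rrbracket$ is a Lie bracket and $\nabla$ is a torsion-free connection for it. With $T = 0$ in hand, \autoref{prop:triple} reduces to $[X,Y,Z] = R(X,Y)Z$. Hence the pre-Lie condition $[X,Y,Z] = 0$ for all $X,Y,Z$ is equivalent to $R = 0$, i.e., $\nabla$ is flat. This gives (ii).

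Conversely, from (ii), torsion-freeness ($T = 0$) and \autoref{prop:lieadm} tell us that $\triangleright$ is the connection structure for which $\llbracket \cdot, \cdot \rrbracket$ is the commutator bracket, and flatness ($R = 0$) together with \autoref{prop:triple} (again using $T = 0$) gives $[X,Y,Z] = R(X,Y)Z = 0$, which is precisely the pre-Lie condition. So (ii) implies (i).

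The only subtlety worth spelling out is the logical direction connecting the pointwise vanishing of $[X,Y,Z]$ to that of $R$: \autoref{prop:triple} is an identity valid for any connection on any Lie algebra, so once we know $T = 0$ it reads $[X,Y,Z] = R(X,Y)Z$ as an equality of functions, and vanishing of one side for all arguments is equivalent to vanishing of the other. There is no real obstacle here; the entire argument is a two-line bookkeeping exercise assembling \autoref{prop:lieadm} and \autoref{prop:triple}. The main thing to be careful about is not to double-count: one must first establish torsion-freeness (via the Lie-admissible reduction) before invoking \autoref{prop:triple} in its simplified form, rather than trying to extract both $T = 0$ and $R = 0$ simultaneously from $[X,Y,Z] = 0$.
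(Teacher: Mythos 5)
Your proposal is correct and follows essentially the same route as the paper's own proof: reduce to \autoref{prop:lieadm} to obtain torsion-freeness (and identify $\llbracket \cdot , \cdot \rrbracket$ with the commutator bracket), then apply \autoref{prop:triple} with $T = 0$ to identify $[X,Y,Z]$ with $R(X,Y)Z$ in both directions. Nothing is missing.
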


\begin{proof}
  If $ ( \mathcal{A}, \triangleright ) $ is pre-Lie, then
  \autoref{prop:lieadm} says that $ \triangleright $ corresponds to a
  torsion-free connection, and \autoref{prop:triple} with $ T = 0 $
  gives $ R(X,Y) Z = [X,Y,Z] = 0 $, so the connection is also
  flat. The converse direction is immediate from \autoref{prop:triple}
  with $ R = 0 $ and $ T = 0 $.
\end{proof}

\subsubsection{Post-Lie algebras}

The notion of post-Lie algebra is due to \citet{Vallette2007}.

\begin{definition}
  A \emph{post-Lie algebra}
  $ \bigl( \mathcal{A}, [ \cdot , \cdot ] , \triangleright \bigr) $ is
  a Lie algebra $ \bigl( \mathcal{A} , [ \cdot , \cdot ] \bigr) $
  equipped with a product $ \triangleright $ satisfying the
  compatibility conditions
  \begin{subequations}
    \label{eqn:postLie}
    \begin{align}
      X \triangleright [ Y, Z ] &= [ X \triangleright Y, Z ] + [ X , Y \triangleright Z ] , \label{eqn:postLieTorsion} \\
      [X,Y] \triangleright Z &=  [X,Y,Z] \label{eqn:postLieCurvature}.
    \end{align}
\end{subequations}
\end{definition}

Given a post-Lie algebra
$ \bigl( \mathcal{A}, [ \cdot , \cdot ] , \triangleright \bigr) $, we
immediately see from \eqref{eqn:postLieCurvature} that
$ ( \mathcal{A}, \triangleright ) $ is pre-Lie if and only if
$ [X,Y] \triangleright Z = 0 $ for all $ X, Y, Z \in \mathcal{A}
$. Furthermore, any pre-Lie algebra
$ ( \mathcal{A}, \triangleright ) $ admits a post-Lie structure by
taking $ [ \cdot , \cdot ] $ to be trivial.

\begin{proposition}
  \label{prop:postLieBracket}
  If $ \bigl( \mathcal{A}, [ \cdot , \cdot ] , \triangleright \bigr) $
  is a post-Lie algebra, then
  \begin{equation}
    \llbracket X, Y \rrbracket \coloneqq X \triangleright Y - Y \triangleright X + [ X, Y ] \label{eq:postLieJac}
  \end{equation}
  is also a Lie bracket on $\mathcal{A}$.
\end{proposition}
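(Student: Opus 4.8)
The plan is to verify directly the three defining properties of a Lie bracket for $\llbracket\cdot,\cdot\rrbracket$. Bilinearity is inherited from $\triangleright$ and $[\cdot,\cdot]$, and skew-symmetry is immediate since both $X\triangleright Y - Y\triangleright X$ and $[X,Y]$ are skew-symmetric; so the entire content is the Jacobi identity, which I would establish in the form $\csum\llbracket X,\llbracket Y,Z\rrbracket\rrbracket = 0$.

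First I would expand $\llbracket X,\llbracket Y,Z\rrbracket\rrbracket$ by substituting $\llbracket Y,Z\rrbracket = Y\triangleright Z - Z\triangleright Y + [Y,Z]$ and then applying the definition of $\llbracket\cdot,\cdot\rrbracket$ once more to the outer bracket. This produces nine terms: a pure double bracket $[X,[Y,Z]]$; the terms $X\triangleright[Y,Z]$ and $-[Y,Z]\triangleright X$; two ``right-associated'' products $X\triangleright(Y\triangleright Z)$ and $-X\triangleright(Z\triangleright Y)$; two ``left-associated'' products $-(Y\triangleright Z)\triangleright X$ and $(Z\triangleright Y)\triangleright X$; and two mixed terms $[X,Y\triangleright Z]$ and $-[X,Z\triangleright Y]$. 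Taking the cyclic sum, I would handle these in three groups.

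\emph{Group (A):} $\csum[X,[Y,Z]] = 0$ by the Jacobi identity for $[\cdot,\cdot]$. \emph{Group (B):} rewrite $\csum X\triangleright[Y,Z]$ using the compatibility condition \eqref{eqn:postLieTorsion}; then, using skew-symmetry of $[\cdot,\cdot]$ to relabel cyclic sums (in particular $\csum[X\triangleright Y,Z] = -\csum[X,Y\triangleright Z]$), one sees that it cancels $\csum\bigl([X,Y\triangleright Z] - [X,Z\triangleright Y]\bigr)$. \emph{Group (C):} rewrite $\csum\bigl(-[Y,Z]\triangleright X\bigr)$ by substituting $[Y,Z]\triangleright X = a(Y,Z,X) - a(Z,Y,X)$ from \eqref{eqn:postLieCurvature}; the left-associated products $(\,\cdot\triangleright\cdot\,)\triangleright\,\cdot$ thereby produced cancel exactly against the two left-associated products already present, and the remaining right-associated products then cancel $\csum\bigl(X\triangleright(Y\triangleright Z) - X\triangleright(Z\triangleright Y)\bigr)$ after the cyclic relabelings $\csum Y\triangleright(Z\triangleright X) = \csum X\triangleright(Y\triangleright Z)$ and $\csum Z\triangleright(Y\triangleright X) = \csum X\triangleright(Z\triangleright Y)$. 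Since each group vanishes, the Jacobi identity holds.

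There is no conceptual difficulty in this argument: the main obstacle will be purely bookkeeping — keeping track of signs and recognizing which cyclic sums coincide after relabeling, so that the cancellations in groups (B) and (C) become visible. One could alternatively regard $\triangleright$ as a connection on the Lie algebra $\bigl(\mathcal{A},[\cdot,\cdot]\bigr)$, express the curvatures of $\triangleright$ and of its $[\cdot,\cdot]$-dual via \autoref{prop:triple} and the post-Lie axioms, and feed these into \autoref{prop:cyclicT}; but since neither curvature vanishes under the post-Lie axioms, the argument does not reduce to \autoref{cor:T_liebracket}, and this route offers no real savings over the direct computation.
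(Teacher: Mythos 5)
Your proof is correct and follows essentially the same route as the paper: expand $\csum\llbracket X,\llbracket Y,Z\rrbracket\rrbracket$ and group the terms into three cyclic sums that vanish by \eqref{eqn:postLieTorsion}, by \eqref{eqn:postLieCurvature}, and by the Jacobi identity for $[\cdot,\cdot]$, respectively --- your groups (B), (C), (A) are exactly the three cyclic sums in the paper's identity \eqref{eqn:postLieAlgebraBianchi}, up to cyclic relabeling. Your closing remark is also apt: the curvature route via \autoref{cor:T_liebracket} would presuppose that $\llbracket\cdot,\cdot\rrbracket$ is already a Lie bracket, so the direct computation is the right tool here.
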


\begin{proof}
  This bracket is always skew-symmetric and bilinear. To establish the
  Jacobi identity for $ \llbracket \cdot , \cdot \rrbracket $, a
  calculation shows that
  \begin{multline}\label{eqn:postLieAlgebraBianchi}
    \csum \bigl\llbracket X, \llbracket Y, Z \rrbracket \bigr\rrbracket = \csum \bigl( X \triangleright [ Y, Z ] - [ X \triangleright Y, Z ] - [ Y, X \triangleright Z ] \bigr) \\
    + \csum \bigl( [X,Y,Z] - [X,Y] \triangleright Z \bigr) + \csum
    \bigl[ X ,[Y,Z] \bigr] .
  \end{multline}
  On the right-hand side, the first cyclic sum vanishes by
  \eqref{eqn:postLieTorsion}, the second by
  \eqref{eqn:postLieCurvature}, and the last by the Jacobi identity
  for $ [ \cdot , \cdot ] $.
\end{proof}

Assuming
$ \bigl( \mathcal{A}, [ \cdot , \cdot ] , \triangleright \bigr) $ is
post-Lie, we consider $ \triangleright $ as a connection on
$ \bigl( \mathcal{A}, \llbracket \cdot , \cdot \rrbracket \bigr) $.
It follows from \eqref{eq:postLieJac} that
$ T ( X ,Y ) = - [ X, Y ] $.  Therefore, the post-Lie condition
\eqref{eqn:postLieTorsion} says that $ \nabla T = 0 $, while
\eqref{eqn:postLieCurvature} says that $ R = 0 $ (using
\autoref{prop:triple} to relate the triple bracket to curvature and
torsion). Furthermore, the vanishing of
\eqref{eqn:postLieAlgebraBianchi} corresponds to the first Bianchi
identity \eqref{eq:B1}.

Conversely, if
$ \bigl( \mathcal{A} , \llbracket \cdot , \cdot \rrbracket \bigr) $ is
a Lie algebra with connection $ \nabla $, we may define
$ [ X, Y ] = - T(X,Y) $ and ask when
$ \bigl( \mathcal{A}, [ \cdot , \cdot ] , \triangleright \bigr) $ is
post-Lie. The following result shows that the conditions
$ \nabla T = 0 $ and $ R = 0 $ are sufficient, as well as necessary.

\begin{proposition}\label{prop:postLie}
  Let $ \triangleright $, $ [ \cdot , \cdot ] $, and
  $ \llbracket \cdot , \cdot \rrbracket $ be related by
  \eqref{eq:postLieJac}. Then the following are equivalent:
  \begin{enumerate}[label=(\roman*)]
  \item
    $ \bigl( \mathcal{A} , [ \cdot , \cdot ] , \triangleright \bigr) $
    is a post-Lie algebra.

  \item
    $ \bigl( \mathcal{A}, \llbracket \cdot , \cdot \rrbracket \bigr) $
    is a Lie algebra with a flat, parallel-torsion connection $ \nabla $.

  \item
    $ \bigl( \mathcal{A}, \llbracket \cdot , \cdot \rrbracket \bigr) $
    is a Lie algebra with a flat connection $ \nabla $ and flat dual
    connection $ \overline{ \nabla } $.
  \end{enumerate} 
\end{proposition}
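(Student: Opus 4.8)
The plan is to prove the chain of equivalences $(i) \Leftrightarrow (ii) \Leftrightarrow (iii)$, using the propositions already established about the associator triple bracket, curvature, and torsion. The crucial bookkeeping fact, to be recorded at the outset, is that under the relation \eqref{eq:postLieJac} we have $T(X,Y) = -[X,Y]$ (this follows by inspection of the definition \eqref{eqn:T} of torsion together with \eqref{eq:postLieJac}), so the two post-Lie axioms translate directly into statements about $\nabla$. Specifically, \eqref{eqn:postLieTorsion} becomes $\nabla_X T(Y,Z) = T(\nabla_X Y, Z) + T(Y, \nabla_X Z)$, which is exactly $(\nabla_Z T)(X,Y) = 0$ after relabeling and using \eqref{eq:NT}; and \eqref{eqn:postLieCurvature}, namely $[X,Y]\triangleright Z = [X,Y,Z]$, becomes $-T(X,Y)\triangleright Z = [X,Y,Z]$, which by \autoref{prop:triple} (stating $[X,Y,Z] = R(X,Y)Z - T(X,Y)\triangleright Z$) is equivalent to $R(X,Y)Z = 0$.

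For $(i) \Leftrightarrow (ii)$: by \autoref{prop:postLieBracket}, $\llbracket \cdot,\cdot\rrbracket$ is a Lie bracket whenever $(\mathcal{A},[\cdot,\cdot],\triangleright)$ is post-Lie, so in that direction there is nothing more to check beyond the two translations above, which give $\nabla T = 0$ and $R = 0$. Conversely, if $\bigl(\mathcal{A},\llbracket\cdot,\cdot\rrbracket\bigr)$ is a Lie algebra with $\nabla$ flat and $\nabla T = 0$, then defining $[X,Y] \coloneqq -T(X,Y)$ makes $[\cdot,\cdot]$ skew-symmetric and bilinear automatically; the Jacobi identity for $[\cdot,\cdot]$ follows from Bianchi's first identity \eqref{eq:B1}, whose right-hand side is $\csum R(X,Y)Z + \csum T(X,T(Y,Z))$: the first term vanishes since $R = 0$, and then $\csum T(X,T(Y,Z)) = \csum(\nabla_X T)(Y,Z) = 0$ since $\nabla T = 0$, giving $\csum [X,[Y,Z]] = 0$. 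With $[\cdot,\cdot]$ now a genuine Lie bracket, reversing the two translations shows the post-Lie axioms \eqref{eqn:postLieTorsion} and \eqref{eqn:postLieCurvature} hold. One should double-check that \eqref{eq:postLieJac} is indeed consistent, i.e., that $X\triangleright Y - Y\triangleright X + [X,Y]$ recovers $\llbracket X,Y\rrbracket$ — but this is immediate from $[X,Y] = -T(X,Y)$ and the definition of $T$.

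For $(ii) \Leftrightarrow (iii)$: assuming $R = 0$ throughout (common to both), \autoref{cor:Rbar_NT} states precisely that $\overline{R} = 0$ if and only if $\nabla T = 0$. So $(ii)$, which is "$R = 0$ and $\nabla T = 0$", is equivalent to "$R = 0$ and $\overline{R} = 0$", which is $(iii)$. This step is essentially a one-line citation of the corollary, so it is not the obstacle.

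I expect the only mildly delicate point to be the careful verification of the index/variable conventions: the post-Lie axioms are written with the acting element in the first slot of $\triangleright$, whereas $(\nabla_Z T)(X,Y)$ in \eqref{eq:NT} puts the differentiating element as the subscript, so one must confirm that "$(\nabla_Z T)(X,Y) = 0$ for all $X,Y,Z$" and "\eqref{eqn:postLieTorsion} for all $X,Y,Z$" are literally the same family of equations (they are, after renaming). No step requires a genuinely new computation; everything reduces to \autoref{prop:triple}, \autoref{cor:Rbar_NT}, \autoref{prop:postLieBracket}, and Bianchi's first identity \eqref{eq:B1}, all proved above.
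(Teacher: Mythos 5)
Your proof is correct and follows essentially the same route as the paper: record $T=-[\cdot,\cdot]$, translate the two post-Lie axioms into $\nabla T=0$ and $R=0$ via \autoref{prop:triple}, and dispatch (ii)$\Leftrightarrow$(iii) with \autoref{cor:Rbar_NT}. The only cosmetic difference is that for the Jacobi identity of $[\cdot,\cdot]=-T$ in the converse direction you invoke Bianchi's first identity \eqref{eq:B1} directly, whereas the paper cites \autoref{cor:T_liebracket} (i.e., \eqref{eqn:bianchi2}); the two citations are interchangeable given what has already been established.
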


\begin{proof}
  We have already shown, in discussion above, that (i) implies (ii),
  and \autoref{cor:Rbar_NT} says that (ii) and (iii) are
  equivalent. To show that (ii) implies (i), observe that
  $ \nabla T = 0 $ and $ R = 0 $ immediately give
  \eqref{eqn:postLieTorsion} and \eqref{eqn:postLieCurvature}, while
  \autoref{cor:T_liebracket} implies that $ [ \cdot , \cdot ] = - T $
  is a Lie bracket.
\end{proof}

\subsection{Algebras of affine connections}
\label{sec:affine}

We now bring geometry into the picture by considering affine
connections. The main result of this section,
\autoref{th:affineconnect}, gives necessary and sufficient conditions
for $M$ to admit a connection giving $ \mathfrak{X} (M) $ a
Lie-admissible, pre-Lie, or post-Lie structure. \citet{MuLu2013} had
previously shown sufficiency but not necessity of these conditions.

Recall that a vector field $ X \in \mathfrak{X} (M) $ on a smooth
manifold defines a derivation $ f \mapsto X [f] $ on
$ C ^\infty (M) $. This forms a Lie algebra
$ \bigl( \mathfrak{X} (M) , \llbracket \cdot , \cdot \rrbracket _J
\bigr) $ with respect to the \emph{Jacobi--Lie bracket},
\begin{equation*}
  \llbracket X, Y \rrbracket _J [f] \coloneqq X \bigl[ Y [f] \bigr] - Y \bigl[ X [f] \bigr] .
\end{equation*} 
An \emph{affine connection} is not only $ \mathbb{R} $-bilinear on
$ \mathfrak{X} (M) $, but is $ C ^\infty (M) $-linear in the first
argument and satisfies a Leibniz rule in the second,
\begin{equation*}
  \nabla _{ f X } Y = f \nabla _X Y , \qquad \nabla _X f Y = X[f] Y + f \nabla _X Y ,
\end{equation*}
for all $ f \in C ^\infty (M) $, $ X, Y \in \mathfrak{X} (M) $. It is
straightforward to show that if $ \nabla $ is an affine connection,
then so are $ \overline{ \nabla } $ and $ \widetilde{ \nabla } $, but
we postpone the proof to the more general setting of Lie algebroids,
in \autoref{sec:algebroids}.  The curvature and torsion of an affine
connection $ \nabla $ are defined with respect to
$ \llbracket \cdot , \cdot \rrbracket _J $, and these definitions
imply that $R$ and $T$ are \emph{tensorial}, i.e.,
$ C ^\infty (M) $-linear in all arguments.

To apply the framework developed in this section to affine
connections, we first show that the brackets
$ \llbracket \cdot , \cdot \rrbracket $ constructed for
Lie-admissible, pre-Lie, and post-Lie algebras agree with the
Jacobi--Lie bracket.

\begin{lemma}\label{lem:LeibnizImpliesJacobiLie}
  If $ \llbracket \cdot , \cdot \rrbracket $ is a Lie bracket on
  $ \mathfrak{X} (M) $ satisfying the Leibniz rule
  \begin{equation}
    \label{eqn:vectorFieldLeibniz}
    \llbracket X, f Y \rrbracket = X [f] Y + f \llbracket X, Y \rrbracket ,
  \end{equation}
  for all $ f \in C ^\infty (M) $, $ X, Y \in \mathfrak{X} (M) $, then
  $ \llbracket \cdot , \cdot \rrbracket = \llbracket \cdot , \cdot
  \rrbracket _J $.
\end{lemma}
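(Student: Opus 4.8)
The plan is to show that the difference $D(X,Y) \coloneqq \llbracket X, Y \rrbracket - \llbracket X, Y \rrbracket_J$ vanishes identically. The first step is to record two elementary properties of $D$. Since the Jacobi--Lie bracket also satisfies the Leibniz rule \eqref{eqn:vectorFieldLeibniz} (this is immediate from $\llbracket X, fY \rrbracket_J [g] = X[f]\,Y[g] + f\,\llbracket X, Y \rrbracket_J[g]$), the $X[f]Y$ terms cancel in the difference and $D(X, fY) = f\,D(X,Y)$; and because both $\llbracket \cdot, \cdot \rrbracket$ and $\llbracket \cdot, \cdot \rrbracket_J$ are skew-symmetric, so is $D$, whence $D$ is also $C^\infty(M)$-linear in its first argument. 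Thus $D$ is an $\mathfrak{X}(M)$-valued skew $2$-tensor. This does not force $D = 0$ on general grounds (such tensors are plentiful); the Jacobi identity, which has not yet been used, is what does the job.

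To exploit it, I would substitute $fZ$ for $Z$ in the Jacobi identity $\llbracket X, \llbracket Y, fZ \rrbracket \rrbracket + \llbracket Y, \llbracket fZ, X \rrbracket \rrbracket + \llbracket fZ, \llbracket X, Y \rrbracket \rrbracket = 0$ for $\llbracket \cdot, \cdot \rrbracket$, and expand every bracket using skew-symmetry and \eqref{eqn:vectorFieldLeibniz}. This is a routine expansion: the terms $Y[f]\,\llbracket X, Z \rrbracket$ and $X[f]\,\llbracket Y, Z \rrbracket$ each occur once with a plus sign and once with a minus sign, hence cancel; the three terms carrying an overall factor of $f$ recombine into $f$ times the Jacobi expression for $\llbracket \cdot, \cdot \rrbracket$ at $(X,Y,Z)$, which vanishes; and what survives is
\begin{equation*}
  \Bigl( X\bigl[ Y[f] \bigr] - Y \bigl[ X[f] \bigr] - \llbracket X, Y \rrbracket [f] \Bigr)\, Z = 0 ,
\end{equation*}
i.e.\ $D(X,Y)[f]\, Z = 0$ for all $Z \in \mathfrak{X}(M)$ and all $f \in C^\infty(M)$. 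Choosing $Z$ nonvanishing at an arbitrary point of $M$ gives $D(X,Y)[f] = 0$ everywhere, and a vector field that annihilates every smooth function is the zero vector field; hence $D(X,Y) = 0$ for all $X, Y$, which is the claim.

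The only genuine obstacle is the one isolated in the first paragraph: $C^\infty(M)$-bilinearity together with skew-symmetry is not enough to conclude, so the Jacobi identity must be brought in, and the efficient way to extract information from it is precisely the $fZ$-substitution, which picks out the "first-order'' part of the identity and arranges for all the higher-order contributions to cancel. Everything beyond that is bookkeeping with the Leibniz rule, and no local coordinate computation is needed (though one could alternatively run a locality argument: \eqref{eqn:vectorFieldLeibniz} forces $\llbracket \cdot, \cdot \rrbracket$ to be a local operator, after which the statement reduces to a coordinate chart).
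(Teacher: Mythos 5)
Your proposal is correct and follows essentially the same route as the paper: substitute $fZ$ into the Jacobi identity for $\llbracket \cdot,\cdot\rrbracket$, expand with the Leibniz rule, observe that everything cancels except $\bigl(\llbracket X,Y\rrbracket_J - \llbracket X,Y\rrbracket\bigr)[f]\,Z = 0$, and conclude since $f$ and $Z$ are arbitrary. The preliminary remarks on tensoriality of the difference are harmless extra context, and the computation itself checks out.
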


\begin{proof}
  Using the Jacobi identity and Leibniz rule, a calculation gives
  \begin{align*}
    0 &= \bigl\llbracket X, \llbracket Y, fZ \rrbracket \bigr\rrbracket + \bigl\llbracket Y, \llbracket f Z, X \rrbracket \bigr\rrbracket + \bigl\llbracket fZ, \llbracket X, Y \rrbracket \bigr\rrbracket \\
      &= X \bigl[ Y [ f] \bigr] Z - Y \bigl[ X [f] \bigr] Z - \llbracket  X, Y \rrbracket [f] Z \\
      &= \bigl( \llbracket X, Y \rrbracket _J - \llbracket X, Y \rrbracket \bigr) [f] Z, 
  \end{align*}
  and the result follows since $ f \in C ^\infty (M) $,
  $ X, Y, Z \in \mathfrak{X} (M) $ are arbitrary.
\end{proof}

\begin{proposition}
  Let $ \nabla $ be an affine connection and $ [ \cdot , \cdot ] $ a
  tensorial bracket.  If
  $ \bigl( \mathfrak{X} (M) , [ \cdot , \cdot ] , \triangleright
  \bigr) $ is post-Lie, then the bracket
  $ \llbracket X, Y \rrbracket \coloneqq X \triangleright Y - Y
  \triangleright X + [ X, Y ] $ of \eqref{eq:postLieJac} agrees with
  the Jacobi--Lie bracket.
\end{proposition}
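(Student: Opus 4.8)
The plan is to combine \autoref{prop:postLieBracket} with \autoref{lem:LeibnizImpliesJacobiLie}. Since $ \bigl( \mathfrak{X} (M) , [ \cdot , \cdot ] , \triangleright \bigr) $ is post-Lie, \autoref{prop:postLieBracket} already guarantees that the bracket $ \llbracket X, Y \rrbracket = X \triangleright Y - Y \triangleright X + [ X, Y ] $ defined by \eqref{eq:postLieJac} is a Lie bracket on $ \mathfrak{X} (M) $. By \autoref{lem:LeibnizImpliesJacobiLie}, it therefore suffices to verify that this bracket satisfies the Leibniz rule \eqref{eqn:vectorFieldLeibniz}, i.e., that $ \llbracket X, f Y \rrbracket = X[f] Y + f \llbracket X, Y \rrbracket $ for all $ f \in C ^\infty (M) $ and $ X, Y \in \mathfrak{X} (M) $; the lemma then immediately yields $ \llbracket \cdot , \cdot \rrbracket = \llbracket \cdot , \cdot \rrbracket _J $.

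To establish the Leibniz rule, I would substitute $ f Y $ into each of the three terms of \eqref{eq:postLieJac} and invoke the defining properties of the ingredients: the Leibniz rule for the affine connection in its second slot gives $ X \triangleright ( f Y ) = X[f] Y + f ( X \triangleright Y ) $; the $ C ^\infty (M) $-linearity of the affine connection in its first slot gives $ ( f Y ) \triangleright X = f ( Y \triangleright X ) $; and the hypothesis that $ [ \cdot , \cdot ] $ is tensorial gives $ [ X, f Y ] = f [ X, Y ] $. Adding the three contributions, the term $ X[f] Y $ occurs exactly once while every other term carries a factor $ f $, so the sum collapses to $ X[f] Y + f \llbracket X, Y \rrbracket $, as required.

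There is no real obstacle here beyond careful bookkeeping, but one point is worth flagging: the bracket $ \llbracket \cdot , \cdot \rrbracket $ is a priori only $ \mathbb{R} $-bilinear, so one cannot simply declare it tensorial; the Leibniz identity must be extracted term-by-term from the three summands, and it is precisely the contrast between the Leibniz behavior of $ \triangleright $ in its second argument and its $ C ^\infty (M) $-linearity in its first that causes the $ X[f] Y $ term to survive with the correct coefficient. Once this is in place, the identification with the Jacobi--Lie bracket follows from \autoref{lem:LeibnizImpliesJacobiLie} with no further work.
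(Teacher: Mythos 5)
Your proposal is correct and follows essentially the same route as the paper: verify the Leibniz rule \eqref{eqn:vectorFieldLeibniz} term-by-term using the Leibniz property of $\triangleright$ in its second slot, the $C^\infty(M)$-linearity in its first slot, and the tensoriality of $[\cdot,\cdot]$, then invoke \autoref{lem:LeibnizImpliesJacobiLie}. Your explicit appeal to \autoref{prop:postLieBracket} to confirm that $\llbracket\cdot,\cdot\rrbracket$ is a Lie bracket (a hypothesis of the lemma) is left implicit in the paper but is a correct and welcome clarification.
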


\begin{proof}
  It suffices to check that the Leibniz rule
  \eqref{eqn:vectorFieldLeibniz} holds:
  \begin{align*}
    \llbracket X, f Y \rrbracket
    &= X \triangleright (f Y) - (f Y) \triangleright X + [ X, f Y ] \\
    &= X[f] Y + f ( X \triangleright Y ) - f ( Y \triangleright X ) + f [X,Y]\\
    &= X[f] Y + f \llbracket X, Y \rrbracket .
  \end{align*}
  The result then follows by \autoref{lem:LeibnizImpliesJacobiLie}.
\end{proof}

Repeating the same computation with $ [ \cdot , \cdot ] = 0 $, we find
the following.

\begin{proposition}
  \label{prop:affine_lie-admissible_jacobi-lie}
  Let $ \nabla $ be an affine connection. If
  $ \bigl( \mathfrak{X} (M) , \triangleright \bigr) $ is
  Lie-admissible, then the commutator bracket
  $ \llbracket X, Y \rrbracket \coloneqq X \triangleright Y - Y
  \triangleright X $ agrees with the Jacobi--Lie bracket.
\end{proposition}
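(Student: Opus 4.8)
The plan is to follow the template of the preceding proposition verbatim, simply specializing to the case $[\cdot,\cdot] = 0$. The first step is to observe that, because $\bigl(\mathfrak{X}(M),\triangleright\bigr)$ is Lie-admissible, \autoref{prop:lieadmissible_bracket} guarantees that the commutator bracket $\llbracket X, Y \rrbracket \coloneqq X \triangleright Y - Y \triangleright X$ is genuinely a Lie bracket on $\mathfrak{X}(M)$. This is the only place the Lie-admissibility hypothesis enters, and it is precisely what puts us in a position to apply \autoref{lem:LeibnizImpliesJacobiLie} at the end.

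The second step is to verify the Leibniz rule \eqref{eqn:vectorFieldLeibniz} for this bracket. Here one uses the defining properties of an affine connection, namely $C^\infty(M)$-linearity in the first slot and the Leibniz rule in the second slot, which in the product notation read $(fX) \triangleright Y = f\,(X \triangleright Y)$ and $X \triangleright (fY) = X[f]\,Y + f\,(X \triangleright Y)$. A one-line computation then gives
\begin{align*}
  \llbracket X, f Y \rrbracket
  &= X \triangleright (f Y) - (f Y) \triangleright X \\
  &= X[f]\, Y + f\,(X \triangleright Y) - f\,(Y \triangleright X) \\
  &= X[f]\, Y + f \llbracket X, Y \rrbracket ,
\end{align*}
so that $\llbracket \cdot , \cdot \rrbracket$ is a Lie bracket satisfying \eqref{eqn:vectorFieldLeibniz}. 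Invoking \autoref{lem:LeibnizImpliesJacobiLie} then forces $\llbracket \cdot , \cdot \rrbracket = \llbracket \cdot , \cdot \rrbracket_J$, which is the claim.

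I do not expect any genuine obstacle: the argument is literally the $[\cdot,\cdot] = 0$ case of the previous proposition, and the computation above is the same as the one carried out there with the bracket terms dropped. The only point worth flagging is that one cannot bypass the first step — the Leibniz computation by itself is insufficient, since \autoref{lem:LeibnizImpliesJacobiLie} presupposes that $\llbracket \cdot , \cdot \rrbracket$ already satisfies the Jacobi identity, and that hypothesis is supplied only by Lie-admissibility through \autoref{prop:lieadmissible_bracket}.
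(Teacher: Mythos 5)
Your proof is correct and is essentially identical to the paper's argument: the paper simply states that one repeats the computation of the preceding post-Lie proposition with $[\cdot,\cdot]=0$, which is exactly the Leibniz-rule verification followed by an appeal to \autoref{lem:LeibnizImpliesJacobiLie} that you carry out. Your remark that Lie-admissibility (via \autoref{prop:lieadmissible_bracket}) is what supplies the Jacobi identity required by that lemma is a correct and worthwhile clarification of a point the paper leaves implicit.
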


Applying \autoref{prop:lieadm}, \autoref{prop:preLie}, and
\autoref{prop:postLie} now yields our main result on algebras of
affine connections.

\begin{theorem}\label{th:affineconnect}
  Let $ \nabla $ be an affine connection on a smooth manifold $M$.
  \begin{enumerate}[label=(\roman*)]
  \item $ \bigl( \mathfrak{X} (M) , \triangleright \bigr) $ is a
    Lie-admissible algebra if and only if $ \nabla $ is torsion-free.\label{th:affineconnect_lie-admissible}
  \item $ \bigl( \mathfrak{X} (M) , \triangleright \bigr) $ is a
    pre-Lie algebra if and only if $ \nabla $ is flat and
    torsion-free.\label{th:affineconnect_pre-lie}
  \item
    $ \bigl( \mathfrak{X} (M) , [ \cdot , \cdot ] , \triangleright
    \bigr) $ is a post-Lie algebra, with $ [ \cdot , \cdot ] $ being
    tensorial, if and only if $ \nabla $ is flat with parallel torsion
    $ T = - [ \cdot , \cdot ] $.\label{th:affineconnect_post-lie}
  \end{enumerate} 
\end{theorem}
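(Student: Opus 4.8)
The plan is to derive all three statements directly from the purely algebraic characterizations already proved --- \autoref{prop:lieadm}, \autoref{prop:preLie}, \autoref{prop:postLie} --- by feeding in the fact, established in \autoref{lem:LeibnizImpliesJacobiLie} and the two propositions immediately preceding this theorem, that the abstract Lie bracket $\llbracket\cdot,\cdot\rrbracket$ appearing in those results coincides with the Jacobi--Lie bracket $\llbracket\cdot,\cdot\rrbracket_J$ in the affine setting. The only subtlety worth flagging is bookkeeping: the algebraic $R$ and $T$ of $\nabla$ in \autoref{prop:lieadm}--\autoref{prop:postLie} are taken with respect to $\llbracket\cdot,\cdot\rrbracket$, whereas the theorem speaks of the affine curvature and torsion, which are taken with respect to $\llbracket\cdot,\cdot\rrbracket_J$; the bracket identification is exactly what reconciles these.

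For the forward directions: if $(\mathfrak{X}(M),\triangleright)$ is Lie-admissible, then \autoref{prop:affine_lie-admissible_jacobi-lie} says the commutator bracket is $\llbracket\cdot,\cdot\rrbracket_J$, so by \autoref{prop:lieadm} the affine torsion of $\nabla$, computed against $\llbracket\cdot,\cdot\rrbracket_J$, vanishes. If moreover $(\mathfrak{X}(M),\triangleright)$ is pre-Lie, \autoref{prop:preLie} additionally forces $R=0$ against the commutator bracket $=\llbracket\cdot,\cdot\rrbracket_J$, i.e.\ $\nabla$ is flat. If $(\mathfrak{X}(M),[\cdot,\cdot],\triangleright)$ is post-Lie with $[\cdot,\cdot]$ tensorial, the proposition preceding \autoref{prop:affine_lie-admissible_jacobi-lie} identifies the bracket \eqref{eq:postLieJac} with $\llbracket\cdot,\cdot\rrbracket_J$, the discussion following \autoref{prop:postLieBracket} gives $T=-[\cdot,\cdot]$, and \autoref{prop:postLie} gives $R=0$ and $\nabla T=0$.

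For the converse directions I would reverse the same chain. If $\nabla$ is torsion-free then $\nabla_XY-\nabla_YX=\llbracket X,Y\rrbracket_J$, so the commutator bracket of $\triangleright$ literally is the Jacobi--Lie bracket, a genuine Lie bracket, and \autoref{prop:lieadm} (resp., with flatness, \autoref{prop:preLie}) yields the Lie-admissible (resp.\ pre-Lie) structure. If $\nabla$ is flat with parallel torsion, set $[\cdot,\cdot]\coloneqq-T$; this is tensorial because affine torsion is, and it is a Lie bracket by \autoref{cor:Rbar_NT} together with \autoref{cor:T_liebracket}. Then $X\triangleright Y-Y\triangleright X+[X,Y]=\nabla_XY-\nabla_YX-T(X,Y)=\llbracket X,Y\rrbracket_J$, so \eqref{eq:postLieJac} again reproduces the Jacobi--Lie bracket, and \autoref{prop:postLie} delivers the post-Lie structure with $T=-[\cdot,\cdot]$ as required. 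I expect no genuine obstacle here --- the whole proof is an assembly of results already in hand --- so the main thing to get right is the verification that the abstract brackets and the Jacobi--Lie bracket agree, which is precisely the role of the lemma and propositions just before the theorem.
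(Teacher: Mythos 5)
Your proposal is correct and follows essentially the same route as the paper, whose proof of this theorem is just the one-line instruction to apply \autoref{prop:lieadm}, \autoref{prop:preLie}, and \autoref{prop:postLie} after the bracket identifications established in \autoref{lem:LeibnizImpliesJacobiLie} and the two propositions immediately preceding the theorem. You have simply written out the bookkeeping that the paper leaves implicit, and done so accurately.
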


Every smooth manifold $M$ admits an affine connection $ \nabla $, and
thus a torsion-free connection $ \widetilde{ \nabla } $, so
Lie-admissibility of
$ \bigl( \mathfrak{X} (M) , \triangleright \bigr) $ reveals nothing
about $M$. By contrast, the other two algebraic structures are deeply
associated with special geometries classified by \citet{Cartan1927}
and \citet{Nomizu1954}.

\begin{corollary} Let $M$ be a smooth manifold.
  \label{cor:nomizu}
  \begin{itemize}
  \item $M$ admits an affine connection $ \nabla $ such that
    $ \bigl( \mathfrak{X} (M) , \triangleright \bigr) $ is pre-Lie if
    and only if $M$ is locally representable as an abelian Lie group
    with its canonical affine connection.

  \item $M$ admits a connection $ \nabla $ and a tensorial bracket
    $ [ \cdot , \cdot ] $ such that
    $ \bigl( \mathfrak{X} (M) , [ \cdot , \cdot ] , \triangleright
    \bigr) $ is post-Lie if and only if $M$ is locally representable
    as a Lie group with its $ ( - ) $-connection.
  \end{itemize} 
\end{corollary}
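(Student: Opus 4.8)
The plan is to reduce the two biconditionals to classical local classification theorems by first translating the algebraic hypotheses into geometric ones via \autoref{th:affineconnect}. By part~\ref{th:affineconnect_pre-lie}, $M$ carries an affine connection making $\bigl(\mathfrak{X}(M),\triangleright\bigr)$ pre-Lie precisely when $M$ carries a flat and torsion-free affine connection; by part~\ref{th:affineconnect_post-lie}, $M$ carries an affine connection and a tensorial bracket making $\bigl(\mathfrak{X}(M),[\cdot,\cdot],\triangleright\bigr)$ post-Lie precisely when $M$ carries a flat affine connection with parallel torsion, in which case necessarily $[\cdot,\cdot]=-T$. It therefore remains to identify these two classes of affine manifolds with the stated local models, and each identification splits into an easy ``if'' direction and the substantive ``only if'' direction.

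For the ``if'' directions I would record the routine verifications. The canonical (bi-invariant) affine connection of an abelian Lie group has vanishing torsion and curvature, hence is flat and torsion-free; and the $(-)$-connection of a Lie group $G$ — the unique affine connection for which every left-invariant vector field is parallel — satisfies $R=0$ and $T(X,Y)=-[X,Y]$ on left-invariant fields $X,Y$. Since left-invariant fields frame $TG$ and $T$ is tensorial, $T$ is determined by these values; the structure constants of $\mathfrak{g}$ being constant in the left-invariant frame then gives $\nabla T=0$, and $[\cdot,\cdot]=-T$ is tensorial, being $C^\infty$-bilinear and skew. All of these properties are local, so they persist on any open set locally isomorphic, as an affine manifold, to such a group.

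For the ``only if'' directions I would argue as follows. Given a flat connection $\nabla$ on $M$, parallel transport is locally path-independent, so around any point there is a frame $(e_1,\dots,e_n)$ of parallel vector fields, i.e.\ $\nabla_{e_i}e_j=0$. Then $T(e_i,e_j)=-\llbracket e_i,e_j\rrbracket_J$, and since $T$ is tensorial these are the structure functions $c_{ij}^k$ of the frame; the hypothesis $\nabla T=0$ forces $\nabla_{e_k}\bigl(T(e_i,e_j)\bigr)=0$, so the $c_{ij}^k$ are constants. (Alternatively, $R=\overline R=0$ by \autoref{cor:Rbar_NT}, so $-T=[\cdot,\cdot]$ is a Lie bracket by \autoref{cor:T_liebracket}, which is again just the statement that the $c_{ij}^k$ are structure constants.) Hence $\mathfrak{g}\coloneqq\operatorname{span}_{\mathbb{R}}(e_1,\dots,e_n)$ is a finite-dimensional Lie algebra of vector fields, pointwise a frame, and by Lie's third theorem together with the Maurer--Cartan/developing-map construction of \citet{Cartan1927} and \citet{Nomizu1954}, a neighborhood of the point is affinely isomorphic to the simply connected Lie group $G$ integrating $\mathfrak{g}$, with $\nabla$ corresponding to the $(-)$-connection. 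When $T=0$ the $c_{ij}^k$ vanish, $\mathfrak{g}$ is abelian, $G$ is locally $\mathbb{R}^n$, and $\nabla$ is the canonical affine connection.

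The main obstacle is exactly this last passage: from the infinitesimal data of a parallel frame with constant structure functions to an honest local Lie group structure realizing $\nabla$. Rather than reprove it, I would cite \citet{Cartan1927} and \citet{Nomizu1954}; alternatively, since \autoref{sec:action} establishes a strictly more general algebroid version of precisely this step, one could instead deduce the corollary from the main results there applied to $A=TM$.
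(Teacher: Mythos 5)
Your proposal is correct and follows essentially the same route as the paper: the paper's proof is exactly the one-line reduction via \autoref{th:affineconnect} followed by an appeal to the classical results stated in \S 20 of \citet{Nomizu1954}. The additional detail you supply (the parallel frame, constancy of the structure functions, and the developing-map step, which you rightly identify as the substantive classical input to be cited rather than reproved) is a faithful unpacking of what those cited results contain, not a different argument.
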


\begin{proof}
  Combine \autoref{th:affineconnect} with results (a) and (b) stated in \S 20 of
  \citet{Nomizu1954}.
\end{proof}

\section{The geometry and algebra of connections on Lie algebroids}
\label{sec:algebroids}

In this section, we recall how connections may be generalized from the
tangent bundle of $M$ (i.e., affine connections) to more general
anchored bundles and Lie algebroids\footnote{Generally, the results of
  this section also hold for \emph{Lie--Rinehart algebras}, which are
  an algebraic abstraction of Lie algebroids (e.g., replacing smooth
  functions on $M$ by a commutative algebra, vector fields on $M$ by
  derivations on that algebra, etc.). However, since we are laying the
  groundwork for \autoref{sec:action}, which \emph{does} use the
  smooth manifold structure, we have chosen to use the language of Lie
  algebroids throughout.} over $M$. We then characterize connections
inducing Lie-admissible, pre-Lie, and post-Lie structures in terms of
their curvature and torsion, generalizing the results of
\autoref{sec:affine} for affine connections.

\subsection{Lie algebroids}

\citet{Pradines1967} is credited for introducing Lie algebroids, which
simultaneously generalize tangent bundles and Lie algebras (among many
other things). A comprehensive treatment is given by
\citet{Mackenzie2005}.

\begin{definition}
  An \emph{anchored bundle} $ ( A, \rho ) $ is a vector bundle
  $ A \rightarrow M $ with a vector bundle morphism
  $ \rho \colon A \rightarrow T M $ called the \emph{anchor map}. A
  \emph{Lie algebroid}
  $ \bigl( A, \rho , \llbracket \cdot , \cdot \rrbracket \bigr) $ is
  an anchored bundle equipped with a Lie bracket
  $ \llbracket \cdot , \cdot \rrbracket $ on the space of sections
  $ \Gamma (A) $, satisfying the Leibniz rule
  \begin{equation}
    \label{eqn:lieAlgebroidLeibniz}
    \llbracket X, f Y \rrbracket = \rho (X) [f] Y + f \llbracket X, Y \rrbracket ,
  \end{equation}
  for all $ f \in C ^\infty (M) $, $ X, Y \in \Gamma (A) $. We say
  that an anchored bundle or Lie algebroid is \emph{transitive} if the
  anchor is surjective.
\end{definition}

\begin{example}
  The tangent bundle $ A = T M $ is a Lie algebroid over $M$, with
  $ \rho $ the identity map on $ T M $ and
  $ \llbracket \cdot , \cdot \rrbracket $ the Jacobi--Lie
  bracket.

  More generally, any involutive distribution
  $ \mathcal{D} \subset T M $ is a Lie algebroid over $M$, with
  $ \rho $ the inclusion $ \mathcal{D} \hookrightarrow T M $ and
  $ \llbracket \cdot , \cdot \rrbracket $ the restriction of the
  Jacobi--Lie bracket to $ \mathcal{D} $. (In fact, this describes a
  \emph{Lie subalgebroid} of the tangent Lie algebroid.)
\end{example}

\begin{example}
  \label{ex:bla}
  A Lie algebra is just a Lie algebroid over a single point, with
  trivial anchor $ \rho = 0 $.

  More generally, a Lie algebroid with trivial anchor is called a
  \emph{bundle of Lie algebras}: since $ \rho = 0 $,
  \eqref{eqn:lieAlgebroidLeibniz} implies that
  $ \llbracket \cdot , \cdot \rrbracket $ is tensorial, so for each
  $ x \in M $ we have a well-defined pointwise Lie bracket
  $ \llbracket \cdot , \cdot \rrbracket _x $ on the fiber $ A _x
  $. Note that the fibers need not be isomorphic as Lie algebras,
  i.e., $A$ need not be isomorphic to the trivial bundle of Lie
  algebras $ \mathfrak{g} \times M $ for any Lie algebra
  $ \mathfrak{g} $.
\end{example}

\begin{example}
  An \emph{action} (sometimes called an \emph{infinitesimal action})
  of a Lie algebra $ \mathfrak{g} $ on $M$ is a Lie algebra
  homomorphism $ \mathfrak{g} \rightarrow \mathfrak{X} (M) $,
  $ \xi \mapsto \xi _M $. The \emph{action algebroid}
  $ A = \mathfrak{g} \ltimes M $ is the trivial vector bundle
  $ \mathfrak{g} \times M \rightarrow M $, together with the anchor
  $ \rho ( \xi, x ) \coloneqq \xi _M (x) $ induced by the action of
  $\mathfrak{g}$ on $M$; the bracket
  $ \llbracket \cdot , \cdot \rrbracket $ is uniquely determined by
  \eqref{eqn:lieAlgebroidLeibniz} and the condition that it agrees
  with the bracket on $ \mathfrak{g} $ for constant sections.

  The action algebroid is transitive precisely when the
  $ \mathfrak{g} $-action is transitive. In particular, if $M$ is a
  homogeneous space, then its transitive Lie group action has a
  corresponding transitive Lie algebra action.
\end{example}

The following is a standard (yet important) property of Lie
algebroids. Some references on Lie algebroids, including
\citet{Mackenzie2005}, include this property as part of the definition
of a Lie algebroid, but this turns out to be redundant. (An
interesting account of this appears in the introduction to
\citet{Grabowski2003}.) The argument is essentially one due to
\citet{Herz1953}, later used by \citet[\S 6.1]{KoMa1990}.

\begin{proposition}
  \label{prop:lieAlgebroidAnchorHomomorphism}
  Given a Lie algebroid
  $ \bigl( A, \rho , \llbracket \cdot , \cdot \rrbracket \bigr) $ over
  $M$, the anchor map induces a Lie algebra homomorphism
  $ \bigl( \Gamma (A), \llbracket \cdot , \cdot \rrbracket \bigr)
  \rightarrow \bigl( \mathfrak{X} (M) , \llbracket \cdot , \cdot
  \rrbracket _J \bigr) $.
\end{proposition}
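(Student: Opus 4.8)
The plan is to show directly that $\rho$ is bracket-preserving, i.e.\ that $\rho\bigl(\llbracket X, Y\rrbracket\bigr) = \llbracket \rho(X), \rho(Y)\rrbracket_J$ for all $X, Y \in \Gamma(A)$; since $\rho$ is a vector bundle morphism it is already $\mathbb{R}$-linear on sections, so this identity is all that remains to check. The technique is exactly the one used in the proof of \autoref{lem:LeibnizImpliesJacobiLie}: substitute a section of the form $fZ$, with $f \in C^\infty(M)$ and $Z \in \Gamma(A)$, into the Jacobi identity and compare the resulting ``top-order'' terms.

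Concretely, I would expand
\[
  \bigl\llbracket X, \llbracket Y, fZ\rrbracket \bigr\rrbracket + \bigl\llbracket Y, \llbracket fZ, X\rrbracket \bigr\rrbracket + \bigl\llbracket fZ, \llbracket X, Y\rrbracket \bigr\rrbracket = 0
\]
by applying the Leibniz rule \eqref{eqn:lieAlgebroidLeibniz} repeatedly, together with skew-symmetry. The terms carrying an overall factor of $f$ reassemble into $f$ times the Jacobi identity for $\llbracket \cdot, \cdot\rrbracket$ on $\Gamma(A)$, hence vanish. The four ``first-order'' terms, each of the form $\rho(\,\cdot\,)[f]$ times a bracket of two of $X, Y, Z$, cancel in two pairs after using skew-symmetry. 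What survives is
\[
  \Bigl( \rho(X)\bigl[\rho(Y)[f]\bigr] - \rho(Y)\bigl[\rho(X)[f]\bigr] - \rho\bigl(\llbracket X, Y\rrbracket\bigr)[f] \Bigr) Z = 0 .
\]

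Since $A$ is a vector bundle, for each $x \in M$ there is a section $Z$ with $Z(x) \neq 0$, so the $C^\infty(M)$-valued coefficient above must vanish identically on $M$; that is, $\rho\bigl(\llbracket X, Y\rrbracket\bigr)[f] = \rho(X)\bigl[\rho(Y)[f]\bigr] - \rho(Y)\bigl[\rho(X)[f]\bigr] = \llbracket \rho(X), \rho(Y)\rrbracket_J[f]$. As $f$ is arbitrary and vector fields on $M$ are determined by their action as derivations of $C^\infty(M)$, this gives $\rho\bigl(\llbracket X, Y\rrbracket\bigr) = \llbracket \rho(X), \rho(Y)\rrbracket_J$, as desired. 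There is no real obstacle here: the computation is routine bookkeeping, and the only step needing a second's care is the passage from ``annihilates every $Z$'' to ``is the zero function,'' which is where the vector bundle structure of $A$ enters.
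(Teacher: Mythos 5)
Your proposal is correct and follows essentially the same route as the paper: substitute $fZ$ into the Jacobi identity, expand via the Leibniz rule \eqref{eqn:lieAlgebroidLeibniz}, observe that everything cancels except $\bigl( \llbracket \rho(X), \rho(Y) \rrbracket_J - \rho(\llbracket X, Y \rrbracket) \bigr)[f] Z$, and conclude by arbitrariness of $f$ and $Z$, exactly mirroring the computation in \autoref{lem:LeibnizImpliesJacobiLie}. Your added remark about choosing $Z$ nonvanishing at each point is a small but welcome explicitation of the final step, which the paper leaves implicit.
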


\begin{proof}
  Just as in the proof of \autoref{lem:LeibnizImpliesJacobiLie}, one
  uses the Jacobi identity together with the Leibniz rule
  \eqref{eqn:lieAlgebroidLeibniz} to calculate
  \begin{align*}
    0 &= \bigl\llbracket X, \llbracket Y, f Z \rrbracket \bigr\rrbracket + \bigl\llbracket Y, \llbracket f Z, X \rrbracket \bigr\rrbracket + \bigl\llbracket fZ, \llbracket X, Y \rrbracket \bigr\rrbracket \\
      &= \rho (X) \bigl[ \rho (Y) [f] \bigr] Z - \rho (Y) \bigl[ \rho (X) [f] \bigr] Z - \rho \bigl( \llbracket X, Y \rrbracket \bigr) [f] Z \\
      &= \Bigl( \bigl\llbracket \rho (X), \rho (Y) \bigr\rrbracket _J - \rho \bigl( \llbracket X, Y \rrbracket \bigr) \Bigr) [f] Z ,
  \end{align*}
  and the result follows since $ f \in C ^\infty (M) $,
  $ X, Y, Z \in \Gamma (A) $ are arbitrary.
\end{proof}

\begin{remark}
  \autoref{lem:LeibnizImpliesJacobiLie} is actually a special case of
  this result. In the language just introduced, the Leibniz rule
  \eqref{eqn:vectorFieldLeibniz} implies that
  $ \bigl( T M , \mathrm{id}_{TM}, \llbracket \cdot , \cdot \rrbracket
  \bigr) $ is a Lie algebroid, so $ \mathrm{id} _{ TM } $ induces a
  Lie algebra isomorphism between
  $ \bigl( \mathfrak{X} (M) , \llbracket \cdot , \cdot \rrbracket
  \bigr) $ and
  $ \bigl( \mathfrak{X} (M) , \llbracket \cdot , \cdot \rrbracket _J
  \bigr) $. That is,
  $ \llbracket \cdot , \cdot \rrbracket = \llbracket \cdot , \cdot
  \rrbracket _J $.
\end{remark}
  
An important consequence of this result is that the image of $\rho$
defines an involutive distribution on $M$, so there exists a
(generally singular) foliation of $M$ into leaves. The restriction to
each leaf $ L \subset M $ defines a transitive Lie algebroid over $L$.

\subsection{Connections, curvature, and torsion} We next discuss
connections, first on anchored bundles and then on Lie algebroids,
where the latter largely follows the treatment given in
\citet{Fernandes2002,CrFe2003}.

\begin{definition}
  Given an anchored bundle $ ( A, \rho ) $ over $M$, an
  \emph{$A$-connection} on a vector bundle $ E \rightarrow M $ is an
  $ \mathbb{R} $-bilinear map
  $ \Gamma (A) \times \Gamma (E) \rightarrow \Gamma (E) $,
  $ (X, u ) \mapsto \nabla _X u $, which is $ C ^\infty (M) $-linear
  in the first argument and satisfies a Leibniz rule in the second,
  i.e.,
  \begin{equation*}
    \nabla _{ f X } u = f \nabla _X u , \qquad \nabla _X f u = \rho (X) [f] u + f \nabla _X u ,
  \end{equation*}
  for all $ f \in C ^\infty (M) $.
\end{definition}

An affine connection is just a $ T M $-connection, where, as before,
the anchor is the identity map. Given a $ T M $-connection on an
anchored bundle $A$, the following construction gives an induced
$A$-connection on $A$.

\begin{proposition}
  \label{prop:tm-connection_a-connection}
  Let $ ( A, \rho ) $ be an anchored bundle over $M$ and $ \nabla $ be
  a $ T M $-connection on $A$. Then
  $ \nabla _X Y \coloneqq \nabla _{ \rho (X) } Y $ is an
  $A$-connection on $A$.
\end{proposition}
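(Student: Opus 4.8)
The plan is to verify directly that the formula $\nabla_X Y \coloneqq \nabla_{\rho(X)} Y$ satisfies the three defining properties of an $A$-connection on $A$: $\mathbb{R}$-bilinearity, $C^\infty(M)$-linearity in the first argument, and the Leibniz rule in the second argument. Each of these follows by unwinding the definition together with the fact that $\rho\colon A \to TM$ is a vector bundle morphism over $M$, hence $C^\infty(M)$-linear on sections: $\rho(fX) = f\,\rho(X)$ for $f \in C^\infty(M)$ and $X \in \Gamma(A)$.

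First I would note $\mathbb{R}$-bilinearity: the map $(X,Y) \mapsto \nabla_{\rho(X)}Y$ is $\mathbb{R}$-linear in $Y$ because $\nabla$ is, and $\mathbb{R}$-linear in $X$ because $\rho$ is $\mathbb{R}$-linear and $\nabla$ is $\mathbb{R}$-linear (indeed $C^\infty(M)$-linear) in its first slot. Next, for $C^\infty(M)$-linearity in the first argument, for $f \in C^\infty(M)$ we compute
\begin{equation*}
  \nabla_{fX} Y = \nabla_{\rho(fX)} Y = \nabla_{f\rho(X)} Y = f\,\nabla_{\rho(X)} Y = f\,\nabla_X Y,
\end{equation*}
using that $\rho$ is $C^\infty(M)$-linear and that $\nabla$, as a $TM$-connection, is $C^\infty(M)$-linear in its first argument. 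Finally, for the Leibniz rule in the second argument,
\begin{equation*}
  \nabla_X (fY) = \nabla_{\rho(X)} (fY) = \rho\bigl(\rho(X)\bigr)[f]\,Y + f\,\nabla_{\rho(X)} Y = \rho(X)[f]\,Y + f\,\nabla_X Y,
\end{equation*}
where the middle step uses the Leibniz rule for the $TM$-connection $\nabla$ applied to the vector field $\rho(X) \in \mathfrak{X}(M)$, and the last step uses that the anchor of the tangent Lie algebroid is the identity, so the relevant directional derivative along $\rho(X) \in \mathfrak{X}(M)$ is just $\rho(X)[f]$.

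There is no real obstacle here; the only point requiring a moment's care is bookkeeping the two different roles of $\rho$ — as the anchor of $A$ (turning sections of $A$ into vector fields so that the Leibniz rule produces $\rho(X)[f]$) and as the map feeding into the $TM$-connection $\nabla_{\rho(X)}$ — but both uses are consistent, and the computation is a one-line check for each axiom.
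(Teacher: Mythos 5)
Your proof is correct and follows essentially the same route as the paper's: verify $\mathbb{R}$-bilinearity from the bilinearity of the $TM$-connection and the linearity of $\rho$, then check $C^\infty(M)$-linearity in the first slot via $\rho(fX) = f\rho(X)$ and the Leibniz rule in the second slot by applying the $TM$-connection's Leibniz rule along the vector field $\rho(X)$. The extra remark about the anchor of the tangent algebroid being the identity is harmless bookkeeping that the paper leaves implicit.
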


\begin{proof}
  $ \mathbb{R} $-bilinearity follows from the $\mathbb{R}$-bilinearity
  of the $ T M $-connection, together with the fact that $\rho$ is a
  vector bundle morphism. For any $ f \in C ^\infty (M) $ and
  $ X, Y \in \Gamma (A) $, we have
  \begin{equation*}
    \nabla _{ f X } Y = \nabla _{ \rho ( f X ) } Y = \nabla _{ f \rho (X) } Y = f \nabla _{ \rho (X) } Y = f \nabla _X Y ,
  \end{equation*}
  and
  \begin{equation*}
    \nabla _X f Y = \nabla _{ \rho (X) } f Y = \rho (X) [f] Y + f \nabla _{ \rho (X) } Y = \rho (X) [f] Y + f \nabla _X Y ,
  \end{equation*}
  which completes the proof.
\end{proof}

If $ \bigl( A , \rho, \llbracket \cdot , \cdot \rrbracket \bigr) $ is
a Lie algebroid and $ \nabla $ is an $A$-connection on $A$, then
$ \nabla $ is also a connection on the Lie algebra
$ \bigl( \Gamma (A) , \llbracket \cdot, \cdot \rrbracket \bigr) $, in
the sense of \autoref{sec:lieAlgebraConnections}. Therefore, all of
the results in that section immediately hold in the Lie algebroid
setting. We now show that
$ \overline{ \nabla } _X Y \coloneqq \nabla _Y X + \llbracket X, Y
\rrbracket $ and
$ \widetilde{ \nabla } \coloneqq \frac{1}{2} ( \nabla + \overline{
  \nabla } ) $ are in fact connections in the Lie algebroid sense, not
just the Lie algebra sense.

\begin{proposition}
  If $ \bigl( A, \rho , \llbracket \cdot , \cdot \rrbracket \bigr) $
  is a Lie algebroid, and if $ \nabla $ is an $A$-connection on $A$,
  then so are $ \overline{ \nabla } $ and $ \widetilde{ \nabla } $.
\end{proposition}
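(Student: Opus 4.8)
The plan is to verify directly that $\overline{\nabla}$ and $\widetilde{\nabla}$ satisfy the two defining properties of an $A$-connection: $C^\infty(M)$-linearity in the first slot, and the Leibniz rule with respect to $\rho$ in the second slot. Since $\widetilde{\nabla} = \frac12(\nabla + \overline{\nabla})$ is an average of two $A$-connections, it will be an $A$-connection as soon as $\overline{\nabla}$ is (the defining conditions are affine in $\nabla$, so convex combinations of $A$-connections are again $A$-connections). Thus the whole proof reduces to checking that $\overline{\nabla}_X Y = \nabla_Y X + \llbracket X, Y \rrbracket$ is an $A$-connection.

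First I would check $C^\infty(M)$-linearity in $X$: compute $\overline{\nabla}_{fX} Y = \nabla_Y (fX) + \llbracket fX, Y \rrbracket$. The first term expands by the Leibniz rule for $\nabla$ in its second argument to $\rho(Y)[f]\, X + f\nabla_Y X$, and the second expands by skew-symmetry and the Leibniz rule \eqref{eqn:lieAlgebroidLeibniz} to $-\rho(Y)[f]\, X + f\llbracket X, Y \rrbracket$. The two $\rho(Y)[f]\,X$ terms cancel, leaving $f\overline{\nabla}_X Y$, as required. Next I would check the Leibniz rule in $Y$: compute $\overline{\nabla}_X (fY) = \nabla_{fY} X + \llbracket X, fY \rrbracket$. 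Here the first term is $f\nabla_Y X$ by $C^\infty(M)$-linearity of $\nabla$ in its first argument, and the second term is $\rho(X)[f]\, Y + f\llbracket X, Y \rrbracket$ by \eqref{eqn:lieAlgebroidLeibniz}. Adding gives $\rho(X)[f]\, Y + f\overline{\nabla}_X Y$, which is exactly the Leibniz rule for $\overline{\nabla}$. Finally, $\mathbb{R}$-bilinearity of $\overline{\nabla}$ is immediate since $\nabla$ is $\mathbb{R}$-bilinear and $\llbracket \cdot, \cdot \rrbracket$ is $\mathbb{R}$-bilinear.

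This is entirely a routine bookkeeping computation; there is no real obstacle. The only subtlety worth flagging is that the two Leibniz/anchor rules interact in a crucial sign-cancellation: in the first-argument check it is precisely the Leibniz rule for $\nabla$ in the second slot that produces the $\rho(Y)[f]\,X$ term needed to cancel the one coming from the algebroid bracket, and this is why the dual connection is again $C^\infty(M)$-linear in its first argument rather than merely $\mathbb{R}$-linear. Once $\overline{\nabla}$ is handled, the claim for $\widetilde{\nabla}$ follows by linearity of the defining conditions, completing the proof.
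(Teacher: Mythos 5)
Your proof is correct and follows essentially the same route as the paper: the same direct verification of $C^\infty(M)$-linearity in the first slot (with the same cancellation of the $\rho(Y)[f]\,X$ terms) and of the Leibniz rule in the second slot, with $\widetilde{\nabla}$ then handled by observing the defining conditions are preserved under affine combinations. No gaps.
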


\begin{proof}
  $ \mathbb{R} $-bilinearity of $ \overline{ \nabla } $ follows from
  the $\mathbb{R}$-bilinearity of $ \nabla $ and of
  $ \llbracket \cdot , \cdot \rrbracket $. For any $ f \in C ^\infty (M) $ and $ X, Y \in \Gamma (A) $,
\begin{align*} 
  \overline{ \nabla } _{ f X } Y
  &= \nabla _Y f X + \llbracket f X, Y \rrbracket \\
  &= \rho (Y) [f] X + f \nabla _Y X - \rho (Y) [f]X + f
    \llbracket X, Y \rrbracket\\
  &= f \overline{ \nabla } _X Y,
\end{align*}
and
\begin{align*} 
  \overline{ \nabla } _X f Y
  &= \nabla _{ f Y } X + \llbracket X, f Y \rrbracket \\
  &= f \nabla _Y X + \rho (X) [f] Y + f \llbracket X, Y \rrbracket \\
  &= \rho (X) [f] Y + f \overline{ \nabla } _X Y ,
\end{align*}
so $ \overline{ \nabla } $ is an $A$-connection. That
$ \widetilde{ \nabla } $ is also an $A$-connection follows easily from
the fact that $ \nabla $ and $ \overline{ \nabla } $ are
$A$-connections.
\end{proof}

\begin{example}
  Let $ \mathfrak{g} $ be a Lie algebra, considered as a Lie algebroid
  over a single point. The trivial connection $ \nabla _\xi \eta = 0 $
  is a $ \mathfrak{g} $-connection on $\mathfrak{g}$, and
  $ \overline{ \nabla } _\xi \eta = \llbracket \xi, \eta \rrbracket =
  \operatorname{ad} _\xi \eta $. We can thus identify $ \nabla $ with
  the trivial representation and $ \overline{ \nabla } $ with the
  adjoint representation of $ \mathfrak{g} $ on itself.  This is
  readily generalized to the case where $A$ is a bundle of Lie
  algebras over $M$.
\end{example}

\begin{example}
  \label{ex:actionAlgebroid}
  Let $ A = \mathfrak{g} \ltimes M $ be an action algebroid. As a
  vector bundle, this is just the trivial bundle
  $ \mathfrak{g} \times M $, so we can define the obvious
  $ T M $-connection $ \nabla $ vanishing on constant
  sections. Identifying $ \xi, \eta \in \mathfrak{g} $ with the
  corresponding constant sections, it follows that the corresponding
  $A$-connections on $A$ satisfy ${\nabla} _\xi \eta = 0 $ and
  $  \overline{\nabla} _\xi \eta = \llbracket \xi, \eta \rrbracket $.

  In particular, if $ M = G $ is the Lie group integrating
  $ \mathfrak{g} $, then we may identify constant sections of
  $ \mathfrak{g} \ltimes G $ with left-invariant vector fields on $G$
  (and arbitrary sections with arbitrary vector fields). Under this
  identification, the connections $ \nabla $, $ \overline{ \nabla } $,
  and $ \widetilde{ \nabla } $ correspond, respectively, to the affine
  $ (-) $-, $ (+) $-, and $ (0) $-connections of \citet{CaSc1926}.
\end{example}

The curvature and torsion of an $A$-connection on a Lie algebroid $A$
are defined exactly as in \eqref{eqn:R}--\eqref{eqn:T}, and all the
results of \autoref{sec:lieAlgebraConnections} involving the curvature
and torsion of $ \nabla $, $ \overline{ \nabla } $, and
$ \widetilde{ \nabla } $ immediately hold in this setting.

As with affine connections, $R$ and $T$ are \emph{tensorial}, i.e.,
$ C ^\infty (M) $-linear in each argument, not just
$ \mathbb{R} $-linear, so they contain local, geometric information
about the connection.\footnote{One also has tensorial curvature in the
  more general setting where $ \nabla $ is an $A$-connection on a
  vector bundle $E \rightarrow M $. The proof is the same, just
  replacing $ Z \in \Gamma (A) $ by $ u \in \Gamma (E) $.} The proof
that
\begin{equation*}
  fR(X,Y)Z = R(fX,Y)Z = R(X,fY)Z = R(X,Y)fZ
\end{equation*}
is by direct calculation, showing that all terms involving the anchor
cancel. For $ R (f X, Y) Z $, one gets the two canceling terms
$ \pm \rho (Y) [f] \nabla _X Z $. Similarly, for $ R(X, fY) $, one
gets the two canceling terms $ \pm \rho (X) [f] \nabla _Y Z
$. Finally, for $ R(X,Y) f Z $, one gets the additional term
$ \bigl( \llbracket \rho (X), \rho (Y) \rrbracket _J - \rho (
\llbracket X, Y \rrbracket ) \bigr) [f] Z $, which vanishes by
\autoref{prop:lieAlgebroidAnchorHomomorphism}. Similarly, one gets
canceling terms $ \pm \rho (Y) [f] X $ when computing $ T(fX, Y) $ and
$ \pm \rho (X) [f] Y $ when computing $ T (X,fY) $, which implies the
tensoriality of $T$.

\subsection{Algebras of $A$-connections}

We next relate the curvature and torsion of an $A$-connection
$ \nabla $ to Lie-admissible, pre-Lie, and post-Lie algebraic
structures on $ \Gamma (A) $ with the product $ \triangleright $. This
generalizes the results of \autoref{sec:affine} on affine connections,
which correspond to the case $ A = T M $. 

\subsubsection{Lie-admissible algebroids}

We begin by introducing Lie-admissible algebroids, which are a natural
generalization of Lie-admissible algebras.

\begin{definition}
  A \emph{Lie-admissible algebroid} $ (A, \rho, \nabla ) $ is an
  anchored bundle $ (A, \rho ) $, equipped with an $A$-connection
  $ \nabla $ on $A$, such that
  $ \bigl( \Gamma (A) , \triangleright \bigr) $ is a Lie-admissible
  algebra.
\end{definition}

\begin{proposition}
  \label{prop:lie-admissible_algebroid_iff_lie_algebroid}
  Let $ ( A, \rho ) $ be an anchored bundle and $ \nabla $ an
  $A$-connection on $A$. Then $ ( A, \rho, \nabla ) $ is a
  Lie-admissible algebroid if and only if $ ( A, \rho ) $ admits a Lie
  algebroid structure such that $ \nabla $ is torsion-free.
\end{proposition}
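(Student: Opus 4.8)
The plan is to reduce the statement to the purely algebraic \autoref{prop:lieadm} by exploiting the tensoriality of the torsion. In the forward direction, suppose $(A,\rho,\nabla)$ is a Lie-admissible algebroid. By \autoref{prop:lieadm} applied to the Lie algebra $\bigl(\Gamma(A),\llbracket\cdot,\cdot\rrbracket\bigr)$ — where $\llbracket X,Y\rrbracket\coloneqq X\triangleright Y-Y\triangleright X$ is the commutator bracket — we obtain a Lie bracket on $\Gamma(A)$ for which $\nabla$ is torsion-free. The one thing that is \emph{not} automatic is that this bracket makes $(A,\rho)$ into a Lie \emph{algebroid}: we must verify the Leibniz rule \eqref{eqn:lieAlgebroidLeibniz}. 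This is where tensoriality of $T$ enters. First I would compute $\llbracket X,fY\rrbracket = X\triangleright(fY)-(fY)\triangleright X = \rho(X)[f]Y + f(X\triangleright Y) - f(Y\triangleright X) = \rho(X)[f]Y + f\llbracket X,Y\rrbracket$, using only the $A$-connection axioms; so the Leibniz rule holds essentially for free, and $\nabla$ is torsion-free with respect to this bracket by construction.

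For the converse, suppose $(A,\rho)$ admits a Lie algebroid structure $\llbracket\cdot,\cdot\rrbracket$ making $\nabla$ torsion-free. Then $\nabla$ is a connection on the Lie algebra $\bigl(\Gamma(A),\llbracket\cdot,\cdot\rrbracket\bigr)$ in the sense of \autoref{sec:lieAlgebraConnections}, and $T=0$ says precisely that $\llbracket X,Y\rrbracket = X\triangleright Y - Y\triangleright X$ is the commutator bracket. By \autoref{prop:lieadm}, $\bigl(\Gamma(A),\triangleright\bigr)$ is then Lie-admissible, so $(A,\rho,\nabla)$ is a Lie-admissible algebroid.

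The only genuinely new content beyond \autoref{prop:lieadm} is the compatibility of the commutator bracket with the anchor via the Leibniz rule, which as noted follows from a one-line computation using the connection axioms. I therefore expect there to be no real obstacle; the proof is a short bookkeeping argument splicing \autoref{prop:lieadm} together with the observation that the commutator bracket of an $A$-connection automatically satisfies \eqref{eqn:lieAlgebroidLeibniz}. If one wants to be careful, one should also note that the Lie algebroid structure produced in the forward direction is in fact the \emph{unique} one compatible with $\nabla$ being torsion-free — it must equal the commutator bracket — but this is immediate from \eqref{eqn:T} and is not strictly needed for the stated equivalence.
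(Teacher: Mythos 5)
Your proposal is correct and follows essentially the same route as the paper: identify the candidate bracket as the commutator of $\triangleright$ (forced by torsion-freeness), invoke the purely algebraic equivalence (\autoref{prop:lieadm}, equivalently \autoref{prop:lieadmissible_bracket}) to get the Lie bracket property, and verify the Leibniz rule \eqref{eqn:lieAlgebroidLeibniz} by the one-line computation from the $A$-connection axioms. The only cosmetic quibble is that your Leibniz-rule check uses the connection axioms directly rather than ``tensoriality of $T$'' as you announce, but the computation you give is exactly the one in the paper.
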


\begin{proof}
  The condition that $ ( A, \rho ) $ admits a Lie algebroid structure
  such that $ \nabla $ is torsion-free simply says that
  $ \bigl( A, \rho, \llbracket \cdot , \cdot \rrbracket \bigr) $ is a
  Lie algebroid, where
  $ \llbracket X, Y \rrbracket \coloneqq X \triangleright Y - Y
  \triangleright X $ is the commutator bracket.
  
  First, if
  $ \bigl( A, \rho, \llbracket \cdot , \cdot \rrbracket \bigr) $ is a
  Lie algebroid, then by definition,
  $ \llbracket \cdot , \cdot \rrbracket $ is a Lie bracket on
  $ \Gamma (A) $, so \autoref{prop:lieadmissible_bracket} implies
  Lie-admissibility.

  Conversely, if $ (A, \rho, \nabla ) $ is Lie-admissible, then
  \autoref{prop:lieadmissible_bracket} implies that
  $ \llbracket \cdot , \cdot \rrbracket $ is a Lie bracket, so it
  suffices to show that it satisfies the Leibniz rule
  \eqref{eqn:lieAlgebroidLeibniz}. Indeed,
  \begin{align*}
    \llbracket X, f Y \rrbracket
    &\coloneqq X \triangleright (f Y ) - (f Y ) \triangleright X\\
    &= \rho (X) [f] Y + f (X \triangleright Y ) - f (Y \triangleright X) \\
    &= \rho (X) [f] Y + f \llbracket X, Y \rrbracket ,
  \end{align*}
  which completes the proof.
\end{proof}

\begin{example}
  A Lie-admissible algebra is just a Lie-admissible algebroid over a
  point; \autoref{prop:lie-admissible_algebroid_iff_lie_algebroid}
  gives the corresponding Lie algebra as a Lie algebroid over a point.
  More generally, a Lie-admissible algebroid with trivial anchor can
  be seen as a ``bundle of Lie-admissible algebras,'' and
  \autoref{prop:lie-admissible_algebroid_iff_lie_algebroid} gives
  the corresponding bundle of Lie algebras.
\end{example}

The next results examine the situation where
$ \bigl( A, \rho, \llbracket \cdot , \cdot \rrbracket \bigr) $ is a
given Lie algebroid, whose bracket is not \emph{a priori} equal to the
commutator of $ \triangleright $.

\begin{proposition}
  \label{prop:lie-admissible_algebroid_torsion}
  Let $ \bigl( A, \rho, \llbracket \cdot , \cdot \rrbracket \bigr) $
  be a Lie algebroid and $ \nabla $ be an $A$-connection on $A$. If
  $ ( A, \rho, \nabla ) $ is Lie-admissible, then
  $ \rho \circ T = 0 $.
\end{proposition}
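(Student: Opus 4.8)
The plan is to exploit the fact that Lie-admissibility of $(A,\rho,\nabla)$ secretly endows $(A,\rho)$ with a \emph{second} Lie algebroid structure on the \emph{same} anchored bundle, namely the one coming from the commutator bracket $\llbracket X, Y \rrbracket' \coloneqq X \triangleright Y - Y \triangleright X$. Indeed, \autoref{prop:lie-admissible_algebroid_iff_lie_algebroid} already establishes that $\bigl(A, \rho, \llbracket \cdot, \cdot \rrbracket'\bigr)$ is a Lie algebroid, with the same anchor $\rho$, and that $\nabla$ is torsion-free with respect to $\llbracket\cdot,\cdot\rrbracket'$. So on the one anchored bundle $(A,\rho)$ we have two competing Lie algebroid brackets: the given $\llbracket \cdot, \cdot \rrbracket$ and the commutator $\llbracket \cdot, \cdot \rrbracket'$.

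Next I would invoke \autoref{prop:lieAlgebroidAnchorHomomorphism} for each of these Lie algebroid structures separately. That proposition says the anchor intertwines the algebroid bracket with the Jacobi--Lie bracket on $\mathfrak{X}(M)$; applying it to both brackets yields $\rho\bigl(\llbracket X, Y \rrbracket\bigr) = \bigl\llbracket \rho(X), \rho(Y) \bigr\rrbracket_J = \rho\bigl(\llbracket X, Y \rrbracket'\bigr)$ for all $X, Y \in \Gamma(A)$. In other words, the two brackets have the same image under $\rho$.

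Finally I would just unwind the definition of the torsion of $\nabla$ relative to the given bracket: $T(X,Y) = \nabla_X Y - \nabla_Y X - \llbracket X, Y \rrbracket = \llbracket X, Y \rrbracket' - \llbracket X, Y \rrbracket$, since $\nabla_X Y - \nabla_Y X = X \triangleright Y - Y \triangleright X$. Applying $\rho$ and using the previous step gives $\rho\bigl(T(X,Y)\bigr) = \rho\bigl(\llbracket X, Y \rrbracket'\bigr) - \rho\bigl(\llbracket X, Y \rrbracket\bigr) = 0$, which is the claim.

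I do not expect any genuine obstacle here: the only thing one has to notice is that Lie-admissibility produces a second Lie algebroid structure sharing the anchor, and that the ``anchor is a homomorphism'' property—the Herz-type argument underlying \autoref{prop:lieAlgebroidAnchorHomomorphism}—is rigid enough to pin down $\rho$ of the bracket no matter which of the two brackets is used. The subtlety the statement warns against, that $\llbracket \cdot, \cdot \rrbracket$ need not coincide with the commutator, is precisely what $T$ measures, and is exactly the quantity that $\rho$ is being shown to annihilate.
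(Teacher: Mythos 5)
Your proposal is correct and is essentially identical to the paper's own proof: the paper likewise uses \autoref{prop:lie-admissible_algebroid_iff_lie_algebroid} to obtain the second Lie algebroid structure given by the commutator bracket, applies \autoref{prop:lieAlgebroidAnchorHomomorphism} to both structures to get $\rho(X \triangleright Y - Y \triangleright X) = \bigl\llbracket \rho(X), \rho(Y) \bigr\rrbracket_J = \rho\bigl(\llbracket X, Y \rrbracket\bigr)$, and concludes $\rho \circ T = 0$.
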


\begin{proof}
  If $ ( A, \rho, \nabla ) $ is Lie-admissible, then
  \autoref{prop:lie-admissible_algebroid_iff_lie_algebroid} implies
  that we have two Lie algebroid structures: one with
  $ \llbracket \cdot , \cdot \rrbracket $, and the other with the
  commutator bracket. However,
  \autoref{prop:lieAlgebroidAnchorHomomorphism} implies that $ \rho $
  maps each of these to the Jacobi--Lie bracket on
  $ \mathfrak{X} (M) $, so
  \begin{equation*}
    \rho ( X \triangleright Y - Y \triangleright X ) = \bigl\llbracket \rho (X) , \rho (Y) \bigr\rrbracket _J = \rho \bigl( \llbracket X, Y \rrbracket \bigr) .
  \end{equation*}
  Hence, $ \rho \bigl( T(X,Y) \bigr) = 0 $, for all
  $ X, Y \in \Gamma (A) $.
\end{proof}

Unlike the situation for affine connections in
\autoref{prop:affine_lie-admissible_jacobi-lie}, we may not
necessarily conclude that $ \llbracket \cdot , \cdot \rrbracket $
agrees with the commutator bracket. However, we \emph{may} conclude
this if the anchor is injective.

\begin{corollary}
  Let $ \bigl( A, \rho, \llbracket \cdot , \cdot \rrbracket \bigr) $
  be a Lie algebroid and $ \nabla $ be an $A$-connection on $A$. If
  $ \nabla $ is torsion-free, then $ ( A , \rho , \nabla ) $ is
  Lie-admissible. The converse is true if $ \rho $ is injective.
\end{corollary}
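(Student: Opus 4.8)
The plan is to treat the two implications separately, reducing each to a result already in hand. For the forward direction, suppose $\nabla$ is torsion-free. Then $T(X,Y) = \nabla_X Y - \nabla_Y X - \llbracket X, Y \rrbracket = 0$ for all $X, Y \in \Gamma(A)$, which says precisely that the commutator bracket $X \triangleright Y - Y \triangleright X$ coincides with the Lie algebroid bracket $\llbracket X, Y \rrbracket$. Since the latter is a Lie bracket by hypothesis, so is the commutator bracket, and \autoref{prop:lieadmissible_bracket} then gives that $\bigl( \Gamma(A), \triangleright \bigr)$ is a Lie-admissible algebra; that is, $(A, \rho, \nabla)$ is a Lie-admissible algebroid.

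For the converse, assume in addition that $\rho$ is injective, and suppose $(A, \rho, \nabla)$ is Lie-admissible. Applying \autoref{prop:lie-admissible_algebroid_torsion} to the given Lie algebroid structure yields $\rho \circ T = 0$. It remains to promote this to $T = 0$: injectivity of the vector bundle morphism $\rho \colon A \to TM$ means that each fiber map $\rho_x \colon A_x \to T_x M$ is injective, so for every $X, Y \in \Gamma(A)$ and every $x \in M$ the element $T(X,Y)_x$ lies in $\ker \rho_x = \{0\}$. Hence $T(X,Y) = 0$ for all $X, Y$, i.e., $\nabla$ is torsion-free.

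I expect no genuine obstacle here: both directions follow at once from the cited propositions. The only point deserving an explicit word is the final step, where one uses that injectivity of a vector bundle morphism is equivalent to fiberwise injectivity, in order to deduce the vanishing of the section $T(X,Y)$ of $A$ from the vanishing of $\rho\bigl( T(X,Y) \bigr)$ in $\mathfrak{X}(M)$.
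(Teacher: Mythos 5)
Your proof is correct and follows essentially the same route as the paper: the forward direction identifies the commutator bracket with $\llbracket \cdot, \cdot \rrbracket$ when $T=0$ and invokes Lie-admissibility of the commutator (the paper cites \autoref{prop:lie-admissible_algebroid_iff_lie_algebroid}, which rests on the same \autoref{prop:lieadmissible_bracket} you use), and the converse applies \autoref{prop:lie-admissible_algebroid_torsion} together with injectivity of $\rho$. Your explicit remark on fiberwise injectivity is a harmless elaboration of a step the paper leaves implicit.
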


\begin{proof}
  If $ T = 0 $, then the commutator bracket of $ \triangleright $ is
  precisely $ \llbracket \cdot , \cdot \rrbracket $, so
  \autoref{prop:lie-admissible_algebroid_iff_lie_algebroid} implies
  that $ ( A, \rho, \nabla ) $ is Lie-admissible.

  Conversely, if $ (A, \rho, \nabla ) $ is Lie-admissible, then
  \autoref{prop:lie-admissible_algebroid_torsion} says that
  $ \rho \circ T = 0 $, which implies $ T = 0 $ under the assumption
  that $ \rho $ is injective.
\end{proof}

\begin{remark}
  \autoref{th:affineconnect}\ref{th:affineconnect_lie-admissible} is a
  special case of this result for $ A = T M $, where the anchor
  $ \rho = \mathrm{id} _{ T M } $ is injective.
\end{remark}

The following counterexample shows that the converse above is
generally not true unless $\rho$ is injective.

\begin{example}
  \label{ex:bla_trivial_connection}
  Consider a bundle of Lie algebras
  $ \bigl( A, 0 , \llbracket \cdot , \cdot \rrbracket \bigr) $. Since
  the anchor is trivial, we may take the trivial connection
  $ \nabla _X Y = 0 $. This is clearly Lie-admissible, but its torsion
  $ T(X,Y) = - \llbracket X, Y \rrbracket $ generally does not vanish.
\end{example}

We may also obtain necessary and sufficient geometric conditions for
Lie-admissibility, in cases where $\rho$ is not injective, by imposing
some mild restrictions on the $A$-connection $ \nabla $. In the next
proposition, we assume that $ \nabla _X Y = 0 $ whenever
$ \rho (X) = 0 $. This is always the case, for instance, when
$ \nabla $ arises from a $ TM $-connection on $A$ using the
construction in \autoref{prop:tm-connection_a-connection}.

\begin{proposition}
  Let $ \bigl( A, \rho , \llbracket \cdot , \cdot \rrbracket \bigr) $
  be a Lie algebroid and $ \nabla $ an $A$-connection on $A$ such that
  $ \nabla _X Y = 0 $ whenever $ \rho (X) = 0 $. Then
  $ ( A, \rho, \nabla ) $ is Lie-admissible if and only if
  $ \rho \circ T = 0 $ and
  \begin{equation*}
    \csum R(X,Y) Z = 0 ,
  \end{equation*}
  for all $ X, Y, Z \in \Gamma (A) $.
\end{proposition}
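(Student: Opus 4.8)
The plan is to reduce the Lie-admissibility condition $ \csum [ X, Y, Z ] = 0 $ to the two stated conditions using \autoref{prop:triple}, which gives $ [ X, Y, Z ] = R(X,Y) Z - T(X,Y) \triangleright Z $ and hence, after taking the cyclic sum, $ \csum [X,Y,Z] = \csum R(X,Y) Z - \csum \bigl( T(X,Y) \triangleright Z \bigr) $. The crucial observation, which is where the standing hypothesis on $ \nabla $ enters, is that if $ \rho \circ T = 0 $, then $ T(X,Y) \triangleright Z = \nabla _{ T(X,Y) } Z = 0 $ for all $ X, Y, Z \in \Gamma (A) $, since $ \rho \bigl( T(X,Y) \bigr) = 0 $ and $ \nabla _W Z = 0 $ whenever $ \rho (W) = 0 $. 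Thus, whenever $ \rho \circ T = 0 $ holds, the torsion term in the cyclic sum drops out entirely and $ \csum [ X, Y, Z ] = \csum R(X,Y) Z $.

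For the ``if'' direction, I would assume $ \rho \circ T = 0 $ and $ \csum R(X,Y) Z = 0 $; the observation above then yields $ \csum [X,Y,Z] = \csum R(X,Y) Z = 0 $, so $ \bigl( \Gamma (A), \triangleright \bigr) $ is Lie-admissible by definition. For the ``only if'' direction, I would assume $ ( A, \rho, \nabla ) $ is Lie-admissible. Since $ \bigl( A, \rho , \llbracket \cdot , \cdot \rrbracket \bigr) $ is a Lie algebroid, \autoref{prop:lie-admissible_algebroid_torsion} applies directly and gives $ \rho \circ T = 0 $, which is the first conclusion. With $ \rho \circ T = 0 $ now in hand, the observation above again collapses the cyclic sum, so $ 0 = \csum [X,Y,Z] = \csum R(X,Y) Z $, which is the second conclusion.

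I do not expect any genuine obstacle here: there are no nontrivial computations, only an application of \autoref{prop:triple}, \autoref{prop:lie-admissible_algebroid_torsion}, and the definition of Lie-admissibility. The one step that is not purely mechanical is recognizing that the hypothesis $ \nabla _X Y = 0 $ when $ \rho (X) = 0 $ is exactly what is needed to force $ T(X,Y) \triangleright Z = 0 $ once $ \rho \circ T = 0 $ is known — in effect, this hypothesis is tailored precisely so that the associator-triple-bracket identity of \autoref{prop:triple} reduces the Lie-admissibility condition to a condition on curvature alone.
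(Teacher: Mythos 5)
Your proposal is correct and follows essentially the same route as the paper: both rely on \autoref{prop:triple} to reduce the associator triple bracket to $R(X,Y)Z - T(X,Y)\triangleright Z$, use the hypothesis on $\nabla$ to kill the torsion term once $\rho\circ T = 0$ is known, and invoke \autoref{prop:lie-admissible_algebroid_torsion} for the forward direction. The paper merely compresses the two directions into ``the result then follows immediately,'' which you have spelled out explicitly.
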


\begin{proof}
  Using \autoref{prop:triple}, we obtain
  \begin{equation*}
    R(X,Y) Z = [X,Y,Z]+ T(X,Y) \triangleright Z .
  \end{equation*}
  If $ \rho \circ T = 0 $, then the assumption on $ \nabla $ gives
  $ T(X,Y) \triangleright Z = 0 $, so
  \begin{equation*}
    R(X,Y) Z = [X,Y,Z] .
  \end{equation*}
  The result then follows immediately from the definition of
  Lie-admissibility, together with
  \autoref{prop:lie-admissible_algebroid_torsion}.
\end{proof}

Finally, note that every Lie algebroid admits an $A$-connection
$ \nabla $ (pick any $ T M $-connection on $A$ and apply
\autoref{prop:tm-connection_a-connection}) and thus admits a
torsion-free $A$-connection $ \widetilde{ \nabla } $. Therefore, as
with the case of affine connections, Lie-admissibility does not
actually reveal any information about the Lie algebroid itself.

\subsubsection{Pre-Lie algebroids}

We next introduce what we call pre-Lie algebroids, which are a natural
generalization of pre-Lie algebras to the algebroid
setting.\footnote{We caution the reader that the term ``pre-Lie
  algebroid'' has occasionally appeared in the literature
  \citep{GrUr1997} to mean an almost-Lie algebroid, i.e., an algebroid
  where $ \llbracket \cdot , \cdot \rrbracket $ is not required to
  satisfy the Jacobi identity \citep{GrJo2011}. This is different from
  our definition.}

\begin{definition}
  A \emph{pre-Lie algebroid} $ (A, \rho, \nabla ) $ is an anchored
  bundle $ (A, \rho ) $, with an $A$-connection $ \nabla $ on $A$,
  such that $ \bigl( \Gamma (A) , \triangleright \bigr) $ is a pre-Lie
  algebra.
\end{definition}

From this definition, we immediately see that every pre-Lie algebroid
is Lie-admissible, so the results of the previous section apply.

\begin{proposition}
  \label{prop:pre-lie_algebroid_iff_flat_and_torsion-free}
  Let $ ( A, \rho ) $ be an anchored bundle and $ \nabla $ an
  $A$-connection on $A$. Then $ ( A, \rho, \nabla ) $ is a pre-Lie
  algebroid if and only if $ ( A, \rho ) $ admits a Lie algebroid
  structure such that $ \nabla $ is flat and torsion-free.
\end{proposition}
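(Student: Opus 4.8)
The plan is to mirror the structure of the proof of \autoref{prop:lie-admissible_algebroid_iff_lie_algebroid}, since a pre-Lie algebroid is in particular Lie-admissible, and then to upgrade that argument using flatness. I would begin with the easy direction: suppose $(A,\rho)$ admits a Lie algebroid structure $\llbracket\cdot,\cdot\rrbracket$ for which $\nabla$ is flat and torsion-free. Because $\nabla$ is torsion-free, the commutator bracket $X\triangleright Y - Y\triangleright X$ coincides with $\llbracket X,Y\rrbracket$, so $\triangleright$ is a connection on the Lie algebra $\bigl(\Gamma(A),\llbracket\cdot,\cdot\rrbracket\bigr)$ that is flat and torsion-free. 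Then \autoref{prop:preLie} (applied in the Lie algebroid setting, as licensed by the discussion following \autoref{prop:tm-connection_a-connection}) immediately gives that $\bigl(\Gamma(A),\triangleright\bigr)$ is pre-Lie, i.e., $(A,\rho,\nabla)$ is a pre-Lie algebroid.

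For the converse, suppose $(A,\rho,\nabla)$ is a pre-Lie algebroid. Since pre-Lie algebras are Lie-admissible, \autoref{prop:lie-admissible_algebroid_iff_lie_algebroid} already supplies a Lie algebroid structure on $(A,\rho)$, namely the commutator bracket $\llbracket X,Y\rrbracket\coloneqq X\triangleright Y - Y\triangleright X$, for which $\nabla$ is torsion-free by construction. It remains only to check that $\nabla$ is flat with respect to this bracket. But this is the Lie-algebra-level statement already contained in \autoref{prop:preLie}: with $\triangleright$ pre-Lie and $\llbracket\cdot,\cdot\rrbracket$ the commutator bracket, \autoref{prop:triple} with $T=0$ gives $R(X,Y)Z = [X,Y,Z] = 0$, so $R=0$. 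Hence $\nabla$ is flat and torsion-free, as required.

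Since neither direction requires any computation beyond what is already established, the "hard part" is essentially bookkeeping: one must be careful that the Lie algebroid structure produced in the converse direction is the commutator bracket (so that torsion-freeness is automatic and \autoref{prop:preLie} applies verbatim), and that in the forward direction the given $\llbracket\cdot,\cdot\rrbracket$ is forced to equal the commutator bracket precisely because $\nabla$ is assumed torsion-free. The only substantive ingredient beyond the purely algebraic \autoref{prop:preLie} is the verification, already carried out in \autoref{prop:lie-admissible_algebroid_iff_lie_algebroid}, that the commutator bracket satisfies the algebroid Leibniz rule \eqref{eqn:lieAlgebroidLeibniz}; I would simply cite that proposition rather than repeat the one-line computation. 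In short, the proof is: pre-Lie algebroid $\iff$ Lie-admissible algebroid with flat $\nabla$ $\iff$ Lie algebroid structure (via the commutator bracket) with $\nabla$ flat and torsion-free, invoking \autoref{prop:lie-admissible_algebroid_iff_lie_algebroid} and \autoref{prop:preLie}.
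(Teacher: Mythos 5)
Your proposal is correct and follows essentially the same route as the paper: reduce to Lie-admissibility via \autoref{prop:lie-admissible_algebroid_iff_lie_algebroid} to get the commutator-bracket Lie algebroid structure with $\nabla$ torsion-free, then use \autoref{prop:triple} with $T=0$ to deduce flatness, with the converse immediate from \autoref{prop:triple} with $R=T=0$. The bookkeeping point you flag (that torsion-freeness forces the given bracket to be the commutator bracket) is exactly how the paper handles it as well.
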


\begin{proof}
  If $ (A, \rho, \nabla ) $ is pre-Lie, then in particular it is
  Lie-admissible. Therefore,
  \autoref{prop:lie-admissible_algebroid_iff_lie_algebroid} implies
  that $ \bigl( A, \rho, \llbracket \cdot , \cdot \rrbracket \bigr) $
  is a Lie algebroid, where $ \llbracket \cdot , \cdot \rrbracket $ is
  the commutator of $ \triangleright $, with respect to which
  $ \nabla $ is torsion-free. As in the proof of
  \autoref{prop:preLie}, applying \autoref{prop:triple} with $ T = 0 $
  gives $ R(X,Y) Z = [X,Y,Z] = 0 $, so the connection is also flat,
  and the converse direction is immediate from \autoref{prop:triple}
  with $ R = 0 $ and $ T = 0 $.
\end{proof}

\begin{proposition}
  \label{prop:pre-Lie_algebroid_iff_flat_and_rho_torsion}
  Let $ \bigl( A, \rho , \llbracket \cdot , \cdot \rrbracket \bigr) $
  be a Lie algebroid and $ \nabla $ an $A$-connection on $A$ such that
  $ \nabla _X Y = 0 $ whenever $ \rho (X) = 0 $. Then
  $ ( A, \rho, \nabla ) $ is pre-Lie if and only if $ R = 0 $ and
  $ \rho \circ T = 0 $.
\end{proposition}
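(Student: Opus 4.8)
The plan is to derive everything from the identity of \autoref{prop:triple}, namely $[X,Y,Z] = R(X,Y)Z - T(X,Y) \triangleright Z$, where $R$ and $T$ denote the curvature and torsion of $\nabla$ relative to the \emph{given} Lie algebroid bracket $\llbracket \cdot , \cdot \rrbracket$. This is legitimate because the given bracket makes $\nabla$ a connection on the Lie algebra $\bigl( \Gamma (A) , \llbracket \cdot , \cdot \rrbracket \bigr)$ in the sense of \autoref{sec:lieAlgebraConnections}, so all the results of that section apply verbatim. The single observation that does all the work is this: if $\rho \circ T = 0$, then for any $X, Y, Z \in \Gamma (A)$ the section $T(X,Y)$ lies in the kernel of $\rho$, so the standing hypothesis on $\nabla$ gives $T(X,Y) \triangleright Z = \nabla _{ T(X,Y) } Z = 0$. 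Hence, whenever $\rho \circ T = 0$, the associator triple bracket reduces to $[X,Y,Z] = R(X,Y)Z$.

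For the reverse direction, assume $R = 0$ and $\rho \circ T = 0$. By the observation above, $[X,Y,Z] = R(X,Y)Z = 0$ for all $X, Y, Z \in \Gamma (A)$, so $\bigl( \Gamma (A) , \triangleright \bigr)$ is a pre-Lie algebra; that is, $(A, \rho, \nabla)$ is a pre-Lie algebroid.

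For the forward direction, assume $(A, \rho, \nabla)$ is pre-Lie. Then $\bigl( \Gamma (A) , \triangleright \bigr)$ is in particular Lie-admissible, so $(A, \rho, \nabla)$ is a Lie-admissible algebroid, and since $\bigl( A, \rho, \llbracket \cdot , \cdot \rrbracket \bigr)$ is a Lie algebroid by hypothesis, \autoref{prop:lie-admissible_algebroid_torsion} yields $\rho \circ T = 0$. Feeding this back into the observation, $0 = [X,Y,Z] = R(X,Y)Z$ for all $X, Y, Z$, so $R = 0$ as well.

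The argument is essentially bookkeeping, and no genuine obstacle is anticipated; the only points requiring care are to keep track that $R$ and $T$ are taken with respect to the prescribed bracket $\llbracket \cdot , \cdot \rrbracket$ rather than the commutator of $\triangleright$, and to notice that the hypothesis ``$\nabla_X Y = 0$ whenever $\rho(X) = 0$'' is exactly what is needed to kill the term $T(X,Y) \triangleright Z$ in \autoref{prop:triple} once $\rho \circ T = 0$ is known.
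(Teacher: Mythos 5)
Your proposal is correct and follows essentially the same route as the paper: both directions hinge on \autoref{prop:triple} together with the observation that $\rho \circ T = 0$ and the hypothesis on $\nabla$ force $T(X,Y) \triangleright Z = 0$, and the forward direction obtains $\rho \circ T = 0$ from Lie-admissibility via \autoref{prop:lie-admissible_algebroid_torsion}. Your explicit remark that all curvature and torsion are taken relative to the prescribed bracket $\llbracket \cdot , \cdot \rrbracket$ is a welcome clarification but not a departure from the paper's argument.
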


\begin{proof}
  If $ ( A, \rho, \nabla ) $ is pre-Lie, then in particular it is
  Lie-admissible, so \autoref{prop:lie-admissible_algebroid_torsion}
  implies that $ \rho \circ T = 0 $.  From the assumption on
  $ \nabla $, we have $ T ( X, Y ) \triangleright Z = 0 $, so
  \autoref{prop:triple} implies $ R(X,Y) Z = [X,Y,Z] = 0 $.
  Conversely, if $ R = 0 $ and $ \rho \circ T = 0 $, then again the
  assumption on $ \nabla $ gives $ T ( X, Y ) \triangleright Z = 0 $,
  so \autoref{prop:triple} implies $ [X,Y,Z] = 0 $.
\end{proof}

\begin{remark}
  If $\rho$ is injective, then the condition $ \rho \circ T = 0 $ in
  \autoref{prop:pre-Lie_algebroid_iff_flat_and_rho_torsion} is
  equivalent to $ T = 0 $. In particular,
  \autoref{th:affineconnect}\ref{th:affineconnect_pre-lie} becomes a
  special case of this result for $ A = T M $, since the anchor
  $ \rho = \mathrm{id} _{ T M } $ is injective.
\end{remark}

\begin{example}
  \label{ex:bla_pre-Lie}
  Recall, from \autoref{ex:bla_trivial_connection}, that if
  $ \bigl( A, 0 , \llbracket \cdot , \cdot \rrbracket \bigr) $ is a
  bundle of Lie algebras with $ \nabla $ the trivial connection, then
  $ ( A, \rho, \nabla ) $ is a Lie-admissible algebroid whose torsion
  $ T(X,Y) = - \llbracket X, Y \rrbracket $ generally does not
  vanish. In fact, this is also a pre-Lie algebroid, since the fact
  that the connection is trivial immediately gives $ [X,Y,Z] = 0
  $. Hence, $ T = 0 $ is generally not a necessary condition for a
  pre-Lie algebroid, unless $ \rho $ is injective.
\end{example}

\subsubsection{Post-Lie algebroids} Unlike the definitions of
Lie-admissible and pre-Lie algebroids above, which to our knowledge
are new, the definition of a post-Lie algebroid appeared in
\citet{MuLu2013}.

\begin{definition}
  A \emph{post-Lie algebroid}
  $ \bigl( A, \rho, [ \cdot , \cdot ] , \nabla \bigr) $ is an anchored
  bundle $ ( A, \rho ) $ with a tensorial Lie bracket
  $ [ \cdot , \cdot ] $ on $ \Gamma (A) $ and an $A$-connection
  $ \nabla $, such that
  $ \bigl( \Gamma (A) , [ \cdot , \cdot ] , \triangleright \bigr) $ is
  a post-Lie algebra.
\end{definition}

\citet[Proposition 2.24]{MuLu2013} showed that a Lie algebroid
equipped with a flat and torsion-free connection admits a post-Lie
algebroid structure. The following theorem, which is the main result
of this section, strengthens this by providing both necessary and
sufficient conditions for a post-Lie structure.

\begin{theorem}
  \label{prop:postLieIffCurvaturesVanish}
  Let $ ( A, \rho ) $ be an anchored bundle and $ \nabla $ an
  $A$-connection on $A$. Then $ (A, \rho ) $ admits a post-Lie
  algebroid structure
  $ \bigl( A, \rho, [ \cdot , \cdot ] , \nabla \bigr) $ if and only if
  it admits a Lie algebroid structure
  $ \bigl( A, \rho, \llbracket \cdot , \cdot \rrbracket \bigr) $ such
  that $ R = \overline{ R } = 0 $.
\end{theorem}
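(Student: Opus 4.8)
The plan is to reduce the statement to the purely algebraic equivalences for post-Lie algebras proved above (\autoref{prop:postLieBracket} and \autoref{prop:postLie}, together with \autoref{cor:Rbar_NT} and \autoref{cor:T_liebracket}), so that the only algebroid-specific work is checking tensoriality of $[\cdot,\cdot]$ in one direction and the algebroid Leibniz rule \eqref{eqn:lieAlgebroidLeibniz} in the other. Throughout I write $X \triangleright Y = \nabla _X Y$ and keep the three brackets $\triangleright$, $[\cdot,\cdot]$, $\llbracket \cdot,\cdot \rrbracket$ tied together by \eqref{eq:postLieJac}.

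For the ``if'' direction I would start from a Lie algebroid bracket $\llbracket \cdot,\cdot \rrbracket$ on $(A,\rho)$ with $R = \overline{R} = 0$ and define $[\cdot,\cdot] \coloneqq -T$. The relation \eqref{eq:postLieJac} then holds by the very definition of $T$; \autoref{cor:T_liebracket} makes $[\cdot,\cdot] = -T$ a Lie bracket on $\Gamma(A)$, and it is tensorial because $T$ is tensorial (as established in the discussion of curvature and torsion above). Since $R = 0$, \autoref{cor:Rbar_NT} upgrades $\overline{R} = 0$ to $\nabla T = 0$, so $\nabla$ is flat with parallel torsion; \autoref{prop:postLie} then yields a post-Lie algebra $\bigl( \Gamma(A), [\cdot,\cdot], \triangleright \bigr)$, and combined with tensoriality of $[\cdot,\cdot]$ this is exactly a post-Lie algebroid structure $\bigl( A, \rho, [\cdot,\cdot], \nabla \bigr)$.

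For the ``only if'' direction, given a post-Lie algebroid $\bigl( A, \rho, [\cdot,\cdot], \nabla \bigr)$ I would set $\llbracket X, Y \rrbracket \coloneqq X \triangleright Y - Y \triangleright X + [X,Y]$ as in \eqref{eq:postLieJac}. \autoref{prop:postLieBracket} gives that this is a Lie bracket on the vector space $\Gamma(A)$, and \autoref{prop:postLie} (applied with the three brackets so related) gives $R = \overline{R} = 0$. The one remaining point is the algebroid Leibniz rule \eqref{eqn:lieAlgebroidLeibniz} for $\llbracket \cdot,\cdot \rrbracket$, which follows from a one-line expansion of $\llbracket X, fY \rrbracket$: the term $\rho(X)[f]\,Y$ comes entirely from $X \triangleright (fY)$ using that $\triangleright$ is an $A$-connection (Leibniz in the second slot), while the remaining terms assemble into $f \llbracket X,Y \rrbracket$ using $C^\infty(M)$-linearity of $\triangleright$ in the first slot and tensoriality of $[\cdot,\cdot]$. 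Hence $\bigl( A, \rho, \llbracket \cdot,\cdot \rrbracket \bigr)$ is a Lie algebroid with $R = \overline{R} = 0$.

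I do not expect a genuine obstacle here: the theorem is essentially a repackaging of \autoref{prop:postLie} in the algebroid language, with no hard estimate involved. The one place that requires attention is that the definition of a post-Lie algebroid demands a \emph{tensorial} Lie bracket $[\cdot,\cdot]$, not merely an $\mathbb{R}$-bilinear one: in the ``if'' direction this is supplied precisely by the tensoriality of the torsion $T$, and in the ``only if'' direction it is precisely tensoriality of $[\cdot,\cdot]$ that makes the Leibniz computation for $\llbracket \cdot,\cdot \rrbracket$ close. I would also remark, though it is not needed for the statement, that by \autoref{prop:lieAlgebroidAnchorHomomorphism} the resulting $\llbracket \cdot,\cdot \rrbracket$ is forced to push forward along $\rho$ to the Jacobi--Lie bracket on $\mathfrak{X}(M)$, which is the link back to the geometric arguments of \autoref{sec:action}.
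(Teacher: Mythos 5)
Your proposal is correct and follows essentially the same route as the paper: both directions reduce to the algebraic equivalences (\autoref{prop:postLieBracket}, \autoref{prop:postLie}, \autoref{cor:Rbar_NT}, \autoref{cor:T_liebracket}, all resting on \autoref{prop:triple} and the Bianchi-type identities), with the only algebroid-specific work being the Leibniz-rule check for $\llbracket \cdot,\cdot \rrbracket$ in one direction and the tensoriality of $[\cdot,\cdot]=-T$ in the other, exactly as in the paper's proof. The paper merely unwinds \autoref{prop:postLie} into explicit uses of \eqref{eqn:bianchi1} and \eqref{eqn:bianchi2} where you cite the packaged propositions, which is a presentational rather than substantive difference.
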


\begin{proof}
  If $ \bigl( A, \rho, [ \cdot . \cdot ] , \nabla \bigr) $ is a
  post-Lie algebroid, then \autoref{prop:postLieBracket} implies that
  $ \llbracket X, Y \rrbracket \coloneqq X \triangleright Y - Y
  \triangleright X + [ X, Y ] $ is a Lie bracket on $ \Gamma (A)
  $. Moreover, for all $ f \in C ^\infty (M) $,
  \begin{align*} 
    \llbracket X, f Y \rrbracket
    &= X \triangleright ( f Y ) - ( f Y ) \triangleright X + [ X, f Y ] \\
    &= \rho (X) [f] Y + f ( X \triangleright Y ) - f ( Y \triangleright X ) + f [ X, Y ] \\
    &= \rho (X) [f] Y + f \llbracket X, Y \rrbracket ,
  \end{align*}
  so the Leibniz rule \eqref{eqn:lieAlgebroidLeibniz} holds, and hence
  $ \bigl( A, \rho , \llbracket \cdot , \cdot \rrbracket \bigr) $ is a
  Lie algebroid. Now, since $ [ X, Y ] = - T (X, Y) $,
  \autoref{prop:triple} implies
  \begin{equation*}
    R(X,Y) Z = [X,Y,Z] - [ X,Y] \triangleright Z ,
  \end{equation*}
  which vanishes by the post-Lie condition
  \eqref{eqn:postLieCurvature}. Substituting $ R = 0 $ into
  \eqref{eqn:bianchi1} and using the definition of $ \nabla T $ from
  \eqref{eq:NT} then gives
  \begin{equation*}
    \overline{ R } ( X, Y ) Z = [ Z \triangleright X , Y ] + [ X, Z \triangleright Y ] - Z \triangleright [X,Y]
  \end{equation*}
  which vanishes by the other post-Lie condition
  \eqref{eqn:postLieTorsion}. Hence, $ R = \overline{ R } = 0 $.

  Conversely, suppose
  $ \bigl( A , \rho, \llbracket \cdot , \cdot \rrbracket \bigr) $ is a
  Lie algebroid such that $ R = \overline{ R } = 0 $, and let
  $ [ X, Y ] = - T ( X, Y ) = \overline{ T } ( X, Y ) $, which is
  tensorial. Then \eqref{eqn:bianchi2} implies that
  $ [ \cdot , \cdot ] $ satisfies the Jacobi identity, so in fact this
  is a tensorial Lie bracket. Finally, \eqref{eqn:bianchi1} implies
  the post-Lie condition \eqref{eqn:postLieTorsion}, while $ R = 0 $
  is equivalent to the post-Lie condition
  \eqref{eqn:postLieCurvature}. Hence,
  $ \bigl( A, \rho, [ \cdot , \cdot ] , \nabla \bigr) $ is a post-Lie
  algebroid.
\end{proof}

\begin{remark}
  \autoref{th:affineconnect}\ref{th:affineconnect_post-lie} is a
  special case of this result when $ A = T M $. Together with the
  preceding results, characterizing Lie-admissible and post-Lie
  algebroids in terms of curvature and torsion, we have now completed
  the generalization of \autoref{th:affineconnect} to the algebroid
  setting.
\end{remark}

\section{Pre-Lie, post-Lie, and action algebroids}
\label{sec:action}

As stated in the introduction, \citet{Munthe-Kaas1999} (see also
\citet{MuWr2008}) showed that Lie--Butcher series methods may be
applied to approximate flows on a manifold $M$ equipped with a
transitive $\mathfrak{g}$-action, where $ \mathfrak{g} $ is a Lie
algebra. This work was motivated by the question of how to construct
and analyze numerical integrators on manifolds more general than Lie
groups. In the language of \citet{MuLu2013} and of this paper, this is
due to the fact that an action algebroid $ \mathfrak{g} \ltimes M $
admits a post-Lie algebroid structure, when equipped with its
canonical flat connection. When $ \mathfrak{g} $ is abelian, this
algebroid is actually pre-Lie, and ordinary Butcher series methods,
such as Runge--Kutta methods, may be used.

In this section, we prove local converses to these statements. Namely,
we prove that every transitive post-Lie algebroid on $M$, whose
$A$-connection arises from a $ T M $-connection, is locally isomorphic
to the action algebroid of a transitive $\mathfrak{g}$-action with its
canonical flat connection---and in the pre-Lie case, $ \mathfrak{g} $
must be abelian. These local isomorphisms are actually global when $M$
is simply connected. Essentially, this shows that there is no other
way of applying Lie--Butcher series methods to $M$, other than by
equipping $M$ with a $\mathfrak{g}$-action.

We note that \citet{Blaom2006} and \citet{AbCr2012} investigated the
question of when a Lie algebroid is (locally) an action algebroid,
dropping the assumption of transitivity but imposing assumptions on
$ \nabla $ that are stronger than the post-Lie condition. (This can be
seen as an alternative way of generalizing the Cartan--Nomizu results
to Lie algebroids.)  Namely, they assume a flat $ T M $-connection on
$A$, which is stronger than $ R = 0 $ for the $A$-connection, and that
it be a \emph{Cartan connection} (in the language of
\citet{Blaom2006}) or have vanishing \emph{basic curvature} (in the
language of \citet{AbCr2012}), which is stronger than
$ \overline{ R } = 0 $. Our proofs adapt some of these techniques
(especially \citet[Proposition 2.12]{AbCr2012}) to the transitive
pre-Lie and post-Lie cases.

In addition to these converse results, we also provide new,
streamlined proofs of some of the forward results that had appeared in
\citet{MuLu2013}, based on the characterizations developed in
\autoref{sec:algebroids} and the tensoriality of the curvature and
torsion.

\subsection{Main results}
\label{sec:main_results}

We begin with the pre-Lie case, characterizing the relationship
between pre-Lie algebroids and abelian action algebroids.

\begin{proposition}
  \label{prop:abelian_action_algebroid_pre-Lie}
  If an abelian Lie algebra $ \mathfrak{g} $ acts on $M$, then the
  action algebroid $ \mathfrak{g} \ltimes M $ admits a pre-Lie
  algebroid structure.
\end{proposition}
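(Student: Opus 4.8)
The plan is to exhibit an explicit $A$-connection on $A = \mathfrak{g} \ltimes M$ and verify the pre-Lie axiom via the curvature/torsion characterization established in \autoref{sec:algebroids}, rather than checking $[X,Y,Z] = 0$ directly. Concretely, I take the canonical $TM$-connection $\nabla$ on the trivial bundle $\mathfrak{g} \times M$ that annihilates constant sections (as in \autoref{ex:actionAlgebroid}), and restrict it to an $A$-connection on $A$ via \autoref{prop:tm-connection_a-connection}, so that $\nabla_\xi \eta = 0$ for constant sections $\xi, \eta \in \mathfrak{g}$. By construction this connection satisfies the hypothesis $\nabla_X Y = 0$ whenever $\rho(X) = 0$, so I am in a position to apply \autoref{prop:pre-Lie_algebroid_iff_flat_and_rho_torsion}: it suffices to show $R = 0$ and $\rho \circ T = 0$.

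First I would compute $R$ and $T$ on constant sections. Since $\nabla$ vanishes on constant sections $\xi, \eta$ and the action algebroid bracket satisfies $\llbracket \xi, \eta \rrbracket = [\xi, \eta]_{\mathfrak{g}}$ (again a constant section), we get $R(\xi, \eta) = \nabla_\xi \nabla_\eta - \nabla_\eta \nabla_\xi - \nabla_{\llbracket \xi, \eta \rrbracket} = 0$ directly, and $T(\xi, \eta) = \nabla_\xi \eta - \nabla_\eta \xi - \llbracket \xi, \eta \rrbracket = -[\xi, \eta]_{\mathfrak{g}} = 0$ precisely because $\mathfrak{g}$ is abelian. Then I would invoke the tensoriality of $R$ and $T$ (established at the end of \autoref{sec:algebroids}: they are $C^\infty(M)$-linear in every argument) together with the fact that constant sections pointwise span every fiber $A_x = \mathfrak{g}$. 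Tensoriality means the values of $R$ and $T$ at a point $x$ depend only on the values of their arguments at $x$, so vanishing on a spanning set of sections forces $R = 0$ and $T = 0$ identically on $\Gamma(A)$. In particular $\rho \circ T = 0$, so both hypotheses of \autoref{prop:pre-Lie_algebroid_iff_flat_and_rho_torsion} hold and $(A, \rho, \nabla)$ is a pre-Lie algebroid.

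The main obstacle, such as it is, is not the algebra but making the tensoriality-plus-spanning argument airtight: one must be careful that ``vanishing on constant sections'' genuinely implies ``vanishing on all sections,'' which requires both the tensoriality statement (so evaluation is pointwise) and the observation that an arbitrary section of the trivial bundle is, at each point, a linear combination of the values of constant sections. An alternative, more self-contained route would bypass \autoref{prop:pre-Lie_algebroid_iff_flat_and_rho_torsion} entirely: show directly that $\triangleright = \nabla$ has vanishing associator $[X,Y,Z] = a(X,Y,Z) - a(Y,X,Z) = 0$. On constant sections all products $\xi \triangleright \eta$ vanish, so the associator trivially vanishes there, and one extends to general sections using the Leibniz/$C^\infty$-linearity properties of an $A$-connection — but this still ultimately reduces to the same spanning argument, so I would prefer the first route, which reuses the machinery already assembled. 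Either way, the proof is short and the only real content is that abelianness of $\mathfrak{g}$ is exactly what kills the torsion.
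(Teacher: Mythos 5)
Your proposal is correct and follows essentially the same route as the paper: take the canonical flat $TM$-connection on the trivial bundle, induce the $A$-connection via \autoref{prop:tm-connection_a-connection}, compute $R$ and $T$ on constant sections (where abelianness kills the torsion), and extend by tensoriality. The only cosmetic difference is that you invoke \autoref{prop:pre-Lie_algebroid_iff_flat_and_rho_torsion} where the paper uses \autoref{prop:pre-lie_algebroid_iff_flat_and_torsion-free}, but since you establish $T=0$ outright, either characterization applies.
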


\begin{proof}
  Since $ \mathfrak{g} \ltimes M $ is a trivial bundle, take
  $ \nabla $ to be the flat $ T M $-connection on
  $ \mathfrak{g} \ltimes M $, and consider the corresponding
  $ \mathfrak{g} \ltimes M $-connection arising from
  \autoref{prop:tm-connection_a-connection}. Now, since $R$ and $T$
  are tensors, we may evaluate them pointwise by extending to constant
  sections. However, $ \nabla _\xi \eta = 0 $ and
  $ \llbracket \xi, \eta \rrbracket = 0 $ for all constant sections
  $ \xi, \eta \in \mathfrak{g} $, so $R$ and $T$ vanish. Hence,
  \autoref{prop:pre-lie_algebroid_iff_flat_and_torsion-free} implies
  that this is a pre-Lie algebroid.
\end{proof}

\begin{theorem}
  \label{thm:pre-Lie_iff_abelian_action}
  Let $ ( A, \rho ) $ be a transitive anchored bundle over $M$ and
  $ \nabla $ be a $ TM $-connection on $A$. Then
  $ ( A, \rho, \nabla ) $ is a pre-Lie algebroid if and only if
  $ \bigl( A, \rho, \llbracket \cdot , \cdot \rrbracket \bigr) $ is
  locally isomorphic to the action algebroid of a transitive abelian
  Lie algebra action on $M$, with $ \nabla $ locally the canonical
  flat connection. This isomorphism is global if $M$ is simply
  connected.
\end{theorem}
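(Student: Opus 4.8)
The plan is to prove the two directions separately, with the ``only if'' direction being the substantive one. The ``if'' direction is almost immediate: if $(A,\rho,\llbracket\cdot,\cdot\rrbracket)$ is locally isomorphic to an abelian action algebroid with its canonical flat connection, then \autoref{prop:abelian_action_algebroid_pre-Lie} together with the local nature of the pre-Lie conditions (curvature and torsion are tensorial, hence computable in any local trivialization) shows that $R$ and $\rho\circ T$ vanish, so \autoref{prop:pre-Lie_algebroid_iff_flat_and_rho_torsion} gives that $(A,\rho,\nabla)$ is pre-Lie. I should note that being pre-Lie is a condition on $\bigl(\Gamma(A),\triangleright\bigr)$, which is inherently local in the sense that it can be checked on sections supported in a coordinate chart, so local isomorphism suffices.

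For the ``only if'' direction, suppose $(A,\rho,\nabla)$ is a pre-Lie algebroid. By \autoref{prop:pre-lie_algebroid_iff_flat_and_torsion-free}, $A$ carries a Lie algebroid bracket $\llbracket\cdot,\cdot\rrbracket$ (the commutator of $\triangleright$) making $\nabla$ flat and torsion-free; since $\nabla$ arises from a $TM$-connection, $\nabla_X Y = 0$ whenever $\rho(X)=0$. The idea is to construct, locally, a frame of $\nabla$-parallel sections: since $\nabla$ (viewed as a $TM$-connection on $A$) is flat, around each point there is a neighborhood $U$ and a frame $e_1,\dots,e_n$ of $A|_U$ with $\nabla_v e_i = 0$ for all $v\in T_xM$, $x\in U$ (this is exactly the flatness/holonomy argument, and on a simply connected $M$ it globalizes by the standard monodromy theorem). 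For such parallel sections, $\nabla_{e_i} e_j = \nabla_{\rho(e_i)} e_j = 0$ because the $TM$-connection kills $e_j$ in \emph{every} tangent direction, in particular $\rho(e_i)$. Torsion-freeness then gives $\llbracket e_i, e_j\rrbracket = \nabla_{e_i} e_j - \nabla_{e_j} e_i = 0$, so the parallel sections span a Lie subalgebra $\mathfrak{g}$ of $\bigl(\Gamma(A|_U),\llbracket\cdot,\cdot\rrbracket\bigr)$ that is \emph{abelian} and, being a frame, identifies $A|_U$ with the trivial bundle $\mathfrak{g}\times U$. The anchor $\rho$ restricted to constant sections defines a Lie algebra homomorphism $\mathfrak{g}\to\mathfrak{X}(U)$ by \autoref{prop:lieAlgebroidAnchorHomomorphism}, i.e., a $\mathfrak{g}$-action, and by construction the Lie algebroid $A|_U$ with its bracket and with $\nabla$ is precisely the action algebroid $\mathfrak{g}\ltimes U$ with its canonical flat connection (the bracket is determined by the Leibniz rule plus its value on constant sections, which is zero on both sides). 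Transitivity of the action on $U$ follows from transitivity of $\rho$.

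The main obstacle is verifying that this local model really is the \emph{action algebroid} in the precise sense required, rather than merely an abstractly isomorphic Lie algebroid: one must check that the identification $A|_U\cong\mathfrak{g}\times U$ intertwines $\llbracket\cdot,\cdot\rrbracket$ with the action-algebroid bracket and $\nabla$ with the canonical flat connection simultaneously. This reduces to the uniqueness clause in the definition of action algebroid (the bracket is pinned down by the Leibniz rule and its value on constant sections) combined with the observation that the canonical flat connection on $\mathfrak{g}\ltimes U$ is exactly the one vanishing on constant sections, which is our chosen parallel frame. The global statement under simple connectedness is then the assertion that a flat $TM$-connection on a vector bundle over a simply connected base admits a global parallel frame (standard, via parallel transport being path-independent), after which the same argument applies globally; here one also uses that the frame being parallel in all directions forces $\mathfrak{g}$ to be abelian globally, not just locally.
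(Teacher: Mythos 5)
Your proposal follows essentially the same route as the paper's proof: the same two directions, the local parallel frame for a flat $TM$-connection (global when $M$ is simply connected), the observation that $\nabla_{e_i}e_j=\nabla_{\rho(e_i)}e_j=0$ for parallel sections, and the conclusion that the commutator bracket vanishes on the frame, yielding an abelian $\mathfrak{g}$ acting via $\rho$. The ``if'' direction and the identification of the local model with the action algebroid are handled correctly; you spell out the uniqueness of the action-algebroid bracket in more detail than the paper does, which is harmless.

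There is, however, one unjustified step, and it is precisely the step where the transitivity hypothesis does its work. You write ``since $\nabla$ (viewed as a $TM$-connection on $A$) is flat,'' but what \autoref{prop:pre-lie_algebroid_iff_flat_and_torsion-free} gives you is flatness of the induced $A$-connection, i.e.\ $R(X,Y)Z=0$ for $X,Y,Z\in\Gamma(A)$. To pass to flatness of the $TM$-connection you need the identity $R(X,Y)Z = R_{TM}\bigl(\rho(X),\rho(Y)\bigr)Z$ together with surjectivity of $\rho$, so that $R=0$ forces $R_{TM}=0$. Without transitivity this inference is false: the paper's own example of $A=T\mathbb{S}^2$ with zero anchor is a pre-Lie algebroid whose induced $A$-connection is trivial (hence flat) even though $\mathbb{S}^2$ admits no flat affine connection. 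Your only appeal to transitivity is at the very end, for transitivity of the resulting action; the essential use is here, and without it you have no parallel frame to start from. Once this step is supplied, the rest of your argument goes through as written.
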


\begin{proof}
  The converse follows from the argument in
  \autoref{prop:abelian_action_algebroid_pre-Lie}, with the minor
  modification that we evaluate $R$ and $T$ by extending to
  \emph{locally} constant sections. It only remains to prove the
  forward direction.

  Suppose $ (A, \rho, \nabla ) $ is a pre-Lie algebroid. By
  \autoref{prop:pre-lie_algebroid_iff_flat_and_torsion-free}, the
  $A$-connection $ \nabla $ is flat with respect to the commutator
  bracket $ \llbracket \cdot , \cdot \rrbracket $. Now, denoting by
  $ R _{ T M } $ the curvature of the $ T M $-connection, it is
  straightforward to see that
  \begin{equation*}
    R ( X, Y ) Z = R _{ T M } \bigl( \rho (X) , \rho (Y) \bigr) Z ,
  \end{equation*}
  for all $ X, Y , Z \in \Gamma (A) $. Since $ \rho $ is surjective,
  $ R = 0 $ implies $ R _{ T M } = 0 $, so $ \nabla $ is a flat
  $ T M $-connection on $A$.

  Therefore, we may take a local (or global, if $M$ is simply
  connected) frame of $ \nabla $-flat sections $ e _1, \ldots, e _n
  $. In particular, $ \nabla _{ e _i } e _j = 0 $ for all
  $ i , j = 1, \ldots, n $.  However, since
  $ \llbracket \cdot , \cdot \rrbracket $ is the commutator bracket,
  we have
  \begin{equation*}
    \llbracket e _i , e _j \rrbracket = \nabla _{ e _i } e _j - \nabla _{ e _j } e _i = 0 .
  \end{equation*} 
  Thus,
  $ \mathfrak{g} = \operatorname{span} \{ e _1, \ldots, e _n \} $ is
  an abelian Lie algebra, and $ \rho $ is a $ \mathfrak{g} $-action.
\end{proof}

We next consider the post-Lie case, where $ \mathfrak{g} $ is
generally nonabelian.

\begin{proposition}
  \label{prop:action_algebroid_post-Lie}
  Every action algebroid admits a post-Lie algebroid structure.
\end{proposition}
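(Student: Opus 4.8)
The plan is to produce the post-Lie structure from the canonical flat connection and then appeal to \autoref{prop:postLieIffCurvaturesVanish}. Write $A = \mathfrak{g} \ltimes M$; as a vector bundle this is the trivial bundle $\mathfrak{g} \times M$, and as in \autoref{ex:actionAlgebroid} I would let $\nabla$ be the $TM$-connection on $A$ that vanishes on constant sections, with the induced $A$-connection on $A$ obtained from \autoref{prop:tm-connection_a-connection}; thus $\nabla_\xi \eta = 0$ for all constant sections $\xi,\eta \in \mathfrak{g}$. The Lie algebroid bracket $\llbracket\cdot,\cdot\rrbracket$ is the defining bracket of the action algebroid, which restricts to the bracket of $\mathfrak{g}$ on constant sections. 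By \autoref{prop:postLieIffCurvaturesVanish}, it then suffices to check that $R = \overline{R} = 0$ for this Lie algebroid and this connection.

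Since $R$ and $\overline{R}$ are tensorial (established in \autoref{sec:algebroids}, the latter because $\overline{\nabla}$ is again an $A$-connection on $A$), it is enough to evaluate them on constant sections, which span each fibre. For $R$: on constant sections $\xi,\eta,\zeta \in \mathfrak{g}$ every term of $R(\xi,\eta)\zeta = \nabla_\xi\nabla_\eta\zeta - \nabla_\eta\nabla_\xi\zeta - \nabla_{\llbracket\xi,\eta\rrbracket}\zeta$ vanishes, because $\nabla$ kills constant sections and $\llbracket\xi,\eta\rrbracket$ is again constant (equivalently, one may use $R(X,Y)Z = R_{TM}(\rho(X),\rho(Y))Z$ together with flatness of the trivial $TM$-connection). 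For $\overline{R}$: on constant sections $\overline{\nabla}_\xi\eta = \nabla_\eta\xi + \llbracket\xi,\eta\rrbracket = \llbracket\xi,\eta\rrbracket$ is again constant, so $\overline{\nabla}$ restricts on $\mathfrak{g}$ to the adjoint action, and
\[
  \overline{R}(\xi,\eta)\zeta = \llbracket\xi,\llbracket\eta,\zeta\rrbracket\rrbracket - \llbracket\eta,\llbracket\xi,\zeta\rrbracket\rrbracket - \llbracket\llbracket\xi,\eta\rrbracket,\zeta\rrbracket = 0
\]
by the Jacobi identity in $\mathfrak{g}$. Tensoriality then gives $R = \overline{R} = 0$ everywhere on $M$.

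With $R = \overline{R} = 0$ in hand, \autoref{prop:postLieIffCurvaturesVanish} furnishes the tensorial Lie bracket $[\cdot,\cdot] = -T = \overline{T}$ making $\bigl(A,\rho,[\cdot,\cdot],\nabla\bigr)$ a post-Lie algebroid; on constant sections $[\xi,\eta] = \llbracket\xi,\eta\rrbracket$, so we recover the original bracket of $\mathfrak{g}$. I do not expect a serious obstacle here: the only point needing care is the reduction to constant sections, which is licensed precisely by the tensoriality of the curvature and torsion, and the only computations involved are the (routine) flatness of the trivial $TM$-connection and the Jacobi identity for $\mathfrak{g}$. This also yields a streamlined proof of the forward statement in \citet{MuLu2013}, replacing explicit bracket manipulations by the curvature characterization of \autoref{sec:algebroids}.
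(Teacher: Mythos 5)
Your proposal is correct and follows essentially the same route as the paper: take the flat $TM$-connection on the trivial bundle $\mathfrak{g}\times M$, pass to the induced $A$-connection, use tensoriality to evaluate $R$ and $\overline{R}$ on constant sections (where $R=0$ trivially and $\overline{R}=0$ by the Jacobi identity), and conclude via \autoref{prop:postLieIffCurvaturesVanish}. The only difference is that you spell out the intermediate computations and the identification of the recovered bracket $[\cdot,\cdot]=-T$ with the bracket of $\mathfrak{g}$, which the paper leaves implicit.
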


\begin{proof}
  As in \autoref{prop:abelian_action_algebroid_pre-Lie},
  $ \mathfrak{g} \ltimes M $ is a trivial bundle, so take the flat
  $ T M $-connection and its corresponding
  $ \mathfrak{g} \ltimes M $-connection $ \nabla $. Since $R$ and
  $ \overline{ R } $ are tensors, we may evaluate them pointwise by
  extending to constant sections. However, $ \nabla _\xi \eta = 0 $
  for all constant sections $ \xi, \eta \in \mathfrak{g} $, so
  $R = 0 $ trivially and $ \overline{ R } = 0 $ by the Jacobi
  identity. The result follows by
  \autoref{prop:postLieIffCurvaturesVanish}.
\end{proof}

\begin{theorem}
  \label{thm:post-Lie_iff_action}  
  Let $ ( A, \rho ) $ be a transitive anchored bundle over $M$ and
  $ \nabla $ be a $ TM $-connection on $A$. Then
  $ \bigl( A, \rho, [ \cdot, \cdot ] , \nabla \bigr) $ is a post-Lie
  algebroid if and only if
  $ \bigl( A, \rho, \llbracket \cdot , \cdot \rrbracket \bigr) $ is
  locally isomorphic to the action algebroid of a transitive Lie
  algebra action on $M$, with $ \nabla $ locally the canonical flat
  connection. This isomorphism is global if $M$ is simply connected.
\end{theorem}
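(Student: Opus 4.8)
The plan is to mirror the pre-Lie argument in \autoref{thm:pre-Lie_iff_abelian_action}, substituting \autoref{prop:postLieIffCurvaturesVanish} for \autoref{prop:pre-lie_algebroid_iff_flat_and_torsion-free} and, crucially, handling the torsion by parallel transport rather than by forcing it to vanish. The converse direction needs nothing new: the computation in \autoref{prop:action_algebroid_post-Lie} applies verbatim once one evaluates $R$ and $\overline{R}$ by extending to \emph{locally} constant sections. So essentially all the work is in the forward direction.

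For the forward direction, I would first apply \autoref{prop:postLieIffCurvaturesVanish} to obtain a Lie algebroid structure $\llbracket\cdot,\cdot\rrbracket$ on $(A,\rho)$ with respect to which the $A$-connection $\nabla_X Y = \nabla_{\rho(X)}Y$ (\autoref{prop:tm-connection_a-connection}) has $R = \overline{R} = 0$, and with $[X,Y] = -T(X,Y)$. Exactly as in \autoref{thm:pre-Lie_iff_abelian_action}, the identity $R(X,Y)Z = R_{TM}\bigl(\rho(X),\rho(Y)\bigr)Z$ (which uses \autoref{prop:lieAlgebroidAnchorHomomorphism}) together with surjectivity of $\rho$ promotes $R = 0$ to $R_{TM} = 0$, so $\nabla$ is a flat $TM$-connection on $A$. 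Hence there is a local frame $e_1,\dots,e_n$ of $\nabla$-flat sections---global if $M$ is simply connected, since a flat bundle over a simply connected base is trivial---and $\nabla_{e_i}e_j = 0$ gives $\llbracket e_i,e_j\rrbracket = [e_i,e_j] = -T(e_i,e_j)$.

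The new ingredient, replacing ``$T = 0$'' from the abelian case, is that $\overline{R} = 0$ is equivalent to $\nabla T = 0$ by \autoref{cor:Rbar_NT}. Writing $T(e_i,e_j) = \sum_l c_{ij}^l\, e_l$, expanding $(\nabla_{e_k}T)(e_i,e_j) = 0$ via \eqref{eq:NT}, and using that all covariant derivatives of the frame sections vanish yields $\rho(e_k)[c_{ij}^l] = 0$ for all indices; since $\rho$ is surjective and $e_1,\dots,e_n$ is a frame, the vector fields $\rho(e_1),\dots,\rho(e_n)$ span $T_xM$ at every point, so the coefficients $c_{ij}^l$ are locally constant. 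By \autoref{cor:T_liebracket}, $[\cdot,\cdot] = -T$ is a Lie bracket; restricting it to $\mathfrak{g} \coloneqq \operatorname{span}\{e_1,\dots,e_n\}$ produces a genuine Lie algebra with constant structure constants, and \autoref{prop:lieAlgebroidAnchorHomomorphism} makes $\xi \mapsto \rho(\xi)$ a transitive $\mathfrak{g}$-action. It then remains to check that sending each $e_i$ to the corresponding constant section is an isomorphism $(A,\rho,\llbracket\cdot,\cdot\rrbracket) \cong \mathfrak{g}\ltimes M$ carrying $\nabla$ to the canonical flat connection: it is a vector bundle isomorphism over $\mathrm{id}_M$; it intertwines anchors by the definition of the action; it intertwines brackets because both are determined by the Leibniz rule and their (equal) values on the frame; and it intertwines connections because $\nabla_{e_i}e_j = 0$ and the $\rho(e_i)$ span $TM$, so $\nabla$ kills constant sections.

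I expect the main obstacle to be the conceptual point rather than any single calculation: recognizing that local constancy of the $c_{ij}^l$ is precisely the content of parallel torsion, so that post-Lie (flat, parallel-torsion) yields a \emph{nonabelian} Lie algebra of flat sections, in exact parallel to the way flat-and-torsion-free yields an abelian one---this is the algebroid incarnation of the Cartan--Nomizu dichotomy. Secondary care is required in the verification that the frame-to-constant-section map respects all three structures (anchor, bracket, connection) simultaneously, and in the local-to-global step.
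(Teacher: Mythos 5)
Your proposal is correct and follows essentially the same route as the paper: both reduce to \autoref{prop:postLieIffCurvaturesVanish}, promote $R=0$ to $R_{TM}=0$ via surjectivity of $\rho$ to obtain a flat frame, and then show the structure functions are constant; the paper does this by computing $\overline{R}(e_i,e_j)X = \sum_k \rho(X)[c_{ij}^k]e_k$ directly, while you pass through $\nabla T = 0$ via \autoref{cor:Rbar_NT}, which by \autoref{prop:covtor} is the identical computation in different clothing. Your added verification that the frame-to-constant-section map intertwines anchor, bracket, and connection is a detail the paper leaves implicit, and it is carried out correctly.
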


\begin{proof}
  The proof is similar in spirit to that of
  \autoref{thm:pre-Lie_iff_abelian_action}.  The converse follows from
  the argument in \autoref{prop:action_algebroid_post-Lie}, with the
  minor modification that we evaluate $R$ and $T$ by extending to
  \emph{locally} constant sections. It only remains to prove the
  forward direction.

  Suppose $ \bigl( A, \rho, [ \cdot , \cdot ] , \nabla \bigr) $ is a
  post-Lie algebroid. By \autoref{prop:postLieIffCurvaturesVanish},
  the $A$-connection $ \nabla $ satisfies $ R = \overline{ R } = 0 $
  with respect to the Lie algebroid structure defined by the bracket
  $ \llbracket X, Y \rrbracket \coloneqq X \triangleright Y - Y
  \triangleright X + [ X, Y ] $. As in the proof of
  \autoref{thm:pre-Lie_iff_abelian_action}, $ R = 0 $ together with
  surjectivity of $\rho$ implies that the $ T M $-connection
  $ \nabla $ is flat.

  Therefore, take a local (or global, if $M$ is simply connected)
  frame of $ \nabla $-flat sections $ e _1 , \ldots, e _n $, and
  define the structure functions $ c _{ i j } ^k \in C ^\infty (M) $
  such that
  $ \llbracket e _i , e _j \rrbracket = \sum _{ k = 1 } ^n c _{ i j }
  ^k e _k $. Since these sections are $ \nabla $-flat, for any
  $ i , j = 1, \ldots, n $ and $ X \in \Gamma (A) $, we have
  \begin{gather*}
    \overline{ \nabla } _{ e _i } \overline{ \nabla } _{ e _j } X = \bigl\llbracket e _i , \llbracket e _j , X \rrbracket \bigr\rrbracket , \qquad 
    \overline{ \nabla } _{ e _j } \overline{ \nabla } _{ e _i } X = - \bigl\llbracket e _j , \llbracket X, e _i \rrbracket \bigr\rrbracket ,\\
    \overline{ \nabla } _{ \llbracket e _i , e _j \rrbracket } X = \nabla _X \llbracket e _i , e _j \rrbracket - \bigl\llbracket X, \llbracket e _i, e _j \rrbracket \bigr\rrbracket = \sum _{ k = 1 } ^n  \rho (X) [ c _{ i j } ^k ] e _k - \bigl\llbracket X, \llbracket e _i, e _j \rrbracket \bigr\rrbracket.
  \end{gather*}
  These expressions, together with the Jacobi identity, immediately
  give
  \begin{equation*}
    \overline{ R } ( e _i , e _j ) X = \sum _{ k = 1 } ^n \rho (X) [ c _{ i j } ^k ] e _k ,
  \end{equation*} 
  so $ \overline{ R } = 0 $ implies that
  $ \rho (X) [ c _{ i j } ^k ] = 0 $ for all
  $ i, j, k = 1, \ldots, n $ and $ X \in \Gamma (A) $. Since $ \rho $
  is surjective, this implies that the structure functions
  $ c _{ i j } ^k $ are in fact constants. Therefore,
  $ \mathfrak{g} = \operatorname{span} \{ e _1, \ldots, e _n \} $ is a
  Lie algebra with structure constants $ c _{ i j } ^k $, and $ \rho $
  is a $ \mathfrak{g} $-action.
\end{proof}

\subsection{Remarks on the non-transitive case}

The transitivity assumption is important for numerical integration and
analysis of flows on $M$ using Butcher and Lie--Butcher series
methods.  Transitivity allows us to locally ``lift'' vector fields on
$M$ to sections of $A$, apply these methods using the pre-Lie or
post-Lie structure of $\Gamma (A) $, and then drop back down to $M$.

However, recall that when the Lie algebroid $ A \rightarrow M $ is
non-transitive, the anchor induces a (generally singular) foliation of
$M$ into leaves $ L \subset M $, and the restriction
$ A _L \rightarrow L $ is a transitive Lie algebroid on each leaf. In
this case, we can only ``lift'' vector fields on $M$ that are tangent
to leaves of the foliation, so it is sufficient to restrict to each
leaf and apply the results of \autoref{sec:main_results}.

The following example illustrates that the results of
\autoref{sec:main_results} generally do not hold in the non-transitive
setting, although they do hold leaf-by-leaf.

\begin{example}
  Let $ A = T \mathbb{S} ^2 \rightarrow \mathbb{S} ^2 $, but take
  $ \rho = 0 $ instead of the identity. For any affine connection
  $ \nabla $, the induced $A$-connection is trivial, so
  $(A, \rho, \nabla )$ is a pre-Lie algebroid. In this case, the
  commutator bracket $ \llbracket \cdot , \cdot \rrbracket $ is also
  trivial, so
  $ \bigl( A , \rho , \llbracket \cdot , \cdot \rrbracket \bigr) $ is
  a bundle of abelian Lie algebras over $ \mathbb{S} ^2 $, where each
  fiber is isomorphic as a Lie algebra to $ \mathbb{R}^2 $. (Since the
  fibers are all isomorphic, this is actually something stronger than
  a bundle of Lie algebras: it is a so-called \emph{Lie algebra
    bundle}.)  However, this is not isomorphic---even locally---to the
  trivial action algebroid $ \mathbb{R}^2 \ltimes \mathbb{S} ^2 $ with
  $ \nabla $ its canonical flat connection, since $ \mathbb{S} ^2 $
  does not admit a flat affine connection.

  However, since the anchor is trivial, the leaves of the induced
  foliation are just points $ x \in \mathbb{S} ^2 $. The transitive
  Lie algebroids obtained by restricting to leaves are the abelian Lie
  algebra fibers $ A _x \rightarrow \{ x \} $, and of course, each of
  these is isomorphic to the trivial action algebroid
  $ \mathbb{R}^2 \ltimes \{ x \} $.
\end{example}

Finally, we again mention that the results of
\citet{Blaom2006,AbCr2012} show that $A$ is locally isomorphic to an
action algebroid, even without assuming transitivity, when the
connection satisfies stronger assumptions than the pre-Lie or post-Lie
conditions. That this condition is \emph{strictly} stronger is
illustrated by the counterexample above: in this case, $A$ admits a
pre-Lie structure but generally not a connection of the type
considered by \citet{Blaom2006,AbCr2012}.

\section{Conclusion}

We have characterized Lie-admissible, pre-Lie, and post-Lie algebras
of connections in terms of the curvature and torsion of these
connections. For affine connections on a manifold $M$, we related
pre-Lie and post-Lie structures to classical results of
\citet{Cartan1927} and \citet{Nomizu1954} on manifolds admitting flat
affine connections with vanishing or parallel torsion. In the more
general setting of connections on a transitive Lie algebroid over $M$,
we showed that pre-Lie and post-Lie structures may only arise, locally
(or globally, if $M$ is simply connected), from the action algebroid
$ \mathfrak{g} \ltimes M $ of a transitive $\mathfrak{g}$-action on
$M$, equipped with its canonical flat connection. This generalizes the
Cartan--Nomizu results stated above, which correspond to the special
case $ A = T M $. Furthermore, it implies that the approach of
\citet{Munthe-Kaas1999}, which equips $M$ with a transitive
$\mathfrak{g}$-action and applies (Lie--)Butcher series methods, is
essentially the only way to use this family of methods for numerical
integration on manifolds.

Finally, we remark that \citet{Nomizu1954} also considered invariant
affine connections with parallel (but not necessarily vanishing)
curvature and either vanishing or parallel torsion. Manifolds
admitting such connections are locally representable as symmetric
homogeneous spaces (for vanishing torsion) or reductive homogeneous
spaces (for parallel torsion). These do not fit into the pre-Lie or
post-Lie algebraic framework. For symmetric spaces, the appropriate
algebraic objects are \emph{Lie triple systems}
(\citet{Jacobson1949,Loos1969,Helgason2001}), which were used for
numerical integration on symmetric spaces in
\citet{MuQuZa2014}. Forthcoming work in progress studies algebras of
connections such that the triple bracket $ [ \cdot , \cdot , \cdot ] $
gives rise to a Lie triple system.

\subsection*{Acknowledgments}

Ari Stern was supported in part by a grant from the Simons Foundation
(\#279968).

\end{document}